\theoremstyle{theorem}
\newtheorem{theorem}{Theorem}[section]
\newtheorem{proposition}[theorem]{Proposition}
\newtheorem{lemma}[theorem]{Lemma}
\newtheorem{claim}[theorem]{Claim}
\newtheorem{corollary}[theorem]{Corollary}
\newtheorem{question}[theorem]{Question}
\theoremstyle{definition}
\newtheorem{definition}[theorem]{Definition}
\newtheorem{remark}[theorem]{Remark}
\newcommand{\Vol}{\mathrm{Vol}}
\newcommand{\R}{\mathbb{R}}
\newcommand{\Q}{\mathbb{Q}}
\newcommand{\bb}{\mathbb}
\def\address#1#2{\begingroup
\noindent\parbox[t]{7.8cm}{%
\small{\scshape\ignorespaces#1}\par\vskip1ex
\noindent\small{\itshape E-mail address}%
\/: #2\par\vskip4ex}\hfill%
\endgroup}%
\title{Openness of uniformly valuative stability on the K\"ahler cone of projective manifolds}
\author{Yaxiong Liu}
\date{}
\begin{document}
      	
\maketitle

\begin{abstract}
	Assume that a projective variety is uniformly valuatively stable  with respect to a polarization.  We show that the projective variety is uniformly valuatively stable with respect to any polarization sufficiently close to the original polarization. 
The definition of uniformly valuatively stability in this paper is stronger than that given by Dervan and Legendre in \cite{D-L20}.
We also define the valuative stability for the transcendental K\"ahler classes. 
Our openness result can be extended to the K\"ahler cone of projective manifolds.
\end{abstract}

\section{Introduction}
Finding a canonical metric on K\"{a}hler manifolds is the central problem in K\"{a}hler geometry, especially finding a constant scalar curvature K\"ahler (cscK) metric.
The Yau-Tian-Donaldson (YTD) conjecture predicts that the existence of a cscK metric is equivalent to  algebro-geometric stability, the so-called \textit{K-polystability} due to Tian \cite{TG} and Donaldson \cite{Don02}, on a polarized manifold. 
When the automorphism group of a variety is discrete, K-polystability reduces to K-stability since the classical \textit{Futaki invariant} \cite{FA83} vanishes automatically.
Recently, the notion of K-(poly)stability of a polarized variety has played an important role in algebraic geometry, especially Fano setting.

The YTD conjecture is widely open in general.
There have been considerable strides on these ideas for the Fano case in recent years.
Chen-Donaldson-Sun \cite{CDS15} and Tian \cite{TG15} proved independently that K-polystability implies the existence of K\"ahler-Einstein metrics on Fano manifolds, solving this conjecture in the Fano case (also see \cite{CSW18}, \cite{DS16}, \cite{BBJ20}, \cite{Zh21} for other different methods). 
In the algebraic side, the theory has achieved substantial progress. 
The main breakthrough is due to Fujita \cite{Fuj19 a} (also Fujita and Odaka \cite{FO18}) and Li \cite{Li17}, which re-interprets K-stability in terms of valuations by the algebraic invariant, the so-called \textit{$\delta$-invariant} \cite{FO18} or \textit{$\beta$-invariant} \cite{Fuj19 a}, \cite{Li17}.
One can test K-stability of a Fano variety by computing its $\delta$-invariant or $\beta$-invariant. This is the so-called Fujita-Li criterion.

People study K-stability of Fano varieties from the viewpoint of birational geometry.
An almost complete theory of K-stability of Fano varieties is established.
From this powerful theory, one can construct a desirable moduli space of K-stable Fano varieties, the so-called \textit{K-moduli space}. There are many important works along these lines, due to Xu, Liu, Zhuang, Blum, etc. (see \cite{BX19}, \cite{CP21}, \cite{BLX19}, \cite{Xu20}, \cite{ABHLX20}, \cite{BHLLX20}, \cite{XZ20}, etc.).
We refer the reader to an excellent survey \cite{Xu20a} for the algebraic theory of K-stability of Fano varieties.
Very recently, Liu-Xu-Zhuang \cite{LXZ21} proved that the K-moduli space is proper by solving two profound and challenging conjectures, the so-called Higher Rank Finite Generation conjecture and Optimal Destabilization conjecture.
As an application of those conjectures, they also show that K-stability is equivalent to uniform K-stability for a log Fano pair with the discrete automorphism group.
Moreover, their argument also holds for the non-discrete automorphism group.
The Fujita-Li criterion for K-stability of Fano varieties has played an essential role in all of these developments.

To study K-stability of polarized varieties, the next step is to develop the valuative criterion in the polarized case. 
The original definition of K-stability involves ${\bb C}^*$-degenerations of a polarized variety, the so-called \textit{test configurations}.
Donaldson \cite{Don02} associates a numerical invariant to each test configuration, the so-called \textit{Donaldson-Futaki invariant}. 
K-stability means that this invariant is always positive.
By works of Boucksom, Jonsson, etc (see \cite{BHJ17}, \cite{BJ18}), we can identify a test configuration with a finite generated $\bb Z$-filtration on the section ring of the polarization.

For any valuation, one can associate a filtration to this valuation. When this filtration is finitely generated, such a valuation is called a \textit{dreamy valuation}.
A valuation is called a \textit{divisorial valuation} if it is induced by a prime divisor.
A divisor is called a \textit{dreamy divisor} if the corresponding divisorial valuation is dreamy. 
Dervan and Legendre \cite{D-L20} define a new $\beta$-invariant for polarised varieties, 
which generalizes Fujita's original $\beta$-invariant, by computing the Donaldson-Futaki invariant of the test configuration associated with a dreamy divisor.
They showed that K-stability over integral test configurations is equivalent to valuative stability over dreamy divisors.
Here an integral test configuration means that its central fiber is integral.
It gives an expectation to establish the Fujita-Li criterion in the polarised case.

Unfortunately, examples in \cite{ACGTF08} show that positivity of the Donaldson-Futaki invariant for algebraic test-configurations may not be enough to ensure the existence of a cscK metric.
A stronger notion, the so-called \textit{uniform K-stability}, is introduced by the thesis \cite{Sze06}, and deeply developed in \cite{BHJ17} and \cite{Der16}, which becomes a new candidate for the stability criterion of the existence of a cscK metric.
When the automorphism group of manifolds is discrete, the uniform YTD conjecture states that the uniform K-stability is equivalent to the existence of a cscK metric. 
Very recently, 
Li \cite{Li20} proved the existence of cscK metrics under the condition of  \textit{uniform K-stability for model filtration}, which is stronger than the original uniform K-stability. Moreover, his approach also holds when the automorphism group is non-discrete.

A basic question about uniform stability is whether it is preserved under small perturbations of the polarization or not.
This question is motivated by a classical result of LeBrun-Simanca \cite{LS94}, in which they establish openness results for perturbations of cscK metrics. 
Fujita \cite{Fuj19 b} proved the openness of uniform K-stability for the log canonical and log anti-canonical polarization.
Note that Fujita's result requires that the base variety can have bad singularities, the so-called \textit{demi-normal pair} (see \cite{Kol13} or \cite{Fuj19 b}).
Zhang \cite{Zh21a} proved that the valuative stability threshold ($\delta$-invariant) is continuous on the big cone of Fano manifolds.
Thus, the openness of uniformly valuative stability holds for Fano manifolds. 

In this paper, we establish openness of uniformly valuative stability  for general projective varieties. 
This gives an affirmative answer to the above question for uniformly valuative stability.
Note that our definition of uniformly valuative stability is stronger than that given by Dervan and Legendre in \cite{D-L20}, see Definition \ref{def of valuative stability} and Remark \ref{D-L definition}.

Our main theorem is
\begin{theorem}[see Theorem \ref{UVs open}]
\label{thm 1.1}
For a normal projective variety $X$, the uniformly valuative stability locus 
\begin{equation*}
	{\rm UVs}:=\{[L]\in{\rm Amp}(X)|(X,L)\text{ is uniformly valuatively stable}\}
\end{equation*}
is an open subcone of the ample cone ${\rm Amp(X)}$.
\end{theorem}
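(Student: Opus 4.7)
The plan is to rewrite the uniform valuative stability condition in a form whose $L$-dependence is explicit, and then to obtain a quantitative continuity estimate for the relevant functionals that is \emph{uniform in the dreamy divisor} $E$. By hypothesis, there exists $\epsilon_0>0$ such that $\beta_{L_0}(E)\ge \epsilon_0 J_{L_0}(E)$ for every dreamy divisor $E$ over $X$. I want to show that when $L$ is close to $L_0$ inside $\mathrm{Amp}(X)$, the same inequality persists (with a slightly worse constant). Standard continuity-in-$L$ for each fixed $E$ is immediate from the fact that $\beta_L(E)$ and $J_L(E)$ are built out of intersection numbers on a log resolution together with the truncated volumes $\mathrm{vol}(\pi^{\ast}L - tE)$; the real content is to make the rate of convergence uniform over $E$.

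To achieve the uniform estimate, my first step is to normalize. Scaling $E$ changes $A_X(E)$, $S_L(E)$ and $J_L(E)$ homogeneously, so without loss of generality I may assume $T_L(E)=1$ (or equivalently replace all functionals by their ratios with $T_L(E)^{n+1}$). After this normalization the set of admissible $E$ is still infinite, but all of the functionals $S_L(E),\ J_L(E),\ \beta_L(E)/A_X(E)$ become bounded by explicit constants depending only on $L_0$ and a small neighbourhood of it. The second step is a two-parameter volume estimate. Writing $L = L_0+\eta$ with $\eta$ small, I consider the function
\begin{equation*}
V(s,t)\;=\;\mathrm{vol}\bigl(\pi^{\ast}(L_0+s\eta)-tE\bigr),
\end{equation*}
which is concave on its support and continuously differentiable in $s$ on the big cone. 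Differentiability of the volume (in the sense of Boucksom--Favre--Jonsson / Lazarsfeld--Musta\c{t}\u{a}) together with the log-concavity of $V$ should give an inequality of the shape
\begin{equation*}
\bigl|V(s,t)-V(0,t)\bigr|\;\le\;C\,\|s\eta\|_{L_0}\cdot V(0,t),
\end{equation*}
with $C$ depending only on $L_0$ (and the neighbourhood where $L$ ranges), but not on $E$ or $t\in[0,T_{L_0}(E)]$. Integrating in $t$ converts this into a uniform-in-$E$ comparison between $S_L(E)$ and $S_{L_0}(E)$, and, combined with the analogous intersection-number comparisons for the $K_X$- and $L^{n-1}$-terms, also for $\beta_L(E)$ versus $\beta_{L_0}(E)$ and $J_L(E)$ versus $J_{L_0}(E)$.

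Once such uniform estimates are in place, the conclusion is formal: for $L$ sufficiently close to $L_0$,
\begin{equation*}
\beta_L(E)\;\ge\;\beta_{L_0}(E)-o(1)\,J_{L_0}(E)\;\ge\;(\epsilon_0-o(1))\,J_{L_0}(E)\;\ge\;\tfrac{\epsilon_0}{2}\,J_L(E)
\end{equation*}
for every dreamy $E$, which exhibits an explicit open neighbourhood of $[L_0]$ inside $\mathrm{UVs}$ and thus proves the theorem. The main obstacle is the uniform-in-$E$ step: for each individual $E$ everything is smooth in $L$, but the infimum over the (infinite) set of dreamy divisors could in principle be discontinuous, and ruling this out is exactly what requires the concavity/Newton--Okounkov body input. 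I expect the subtlety to concentrate on controlling divisors $E$ whose pseudo-effective threshold $T_L(E)$ is very small (so that normalization is delicate) or very large (so that the asymmetry between $L$ and $L_0$ could amplify), and on checking that the constant $C$ above is genuinely independent of $E$ and not merely of a fixed finite family.
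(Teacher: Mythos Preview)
Your overall strategy---establish a uniform-in-$E$ comparison between $\beta_{L}(E)$ and $\beta_{L_0}(E)$, then conclude formally---is the right shape, and your volume comparison $|S_L(E)-S_{L_0}(E)|\le C\|L-L_0\|\,S_{L_0}(E)$ is essentially what the paper proves as its first lemma. But there is a genuine gap in how you handle the log discrepancy term, and it is exactly the difficulty the paper isolates.

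The difference $\beta_{L}(E)-\beta_{L_0}(E)$ contains the summand $A_X(E)\bigl(\Vol(L)-\Vol(L_0)\bigr)$. After your normalization $T_{L_0}(E)=1$, the quantities $S_{L_0}(E)$ and $J_{L_0}(E)$ are bounded, but $A_X(E)$ is \emph{not}: log discrepancies are unbounded above over the set of all prime divisors over $X$, and your normalization does nothing to constrain them. Hence the displayed chain $\beta_L(E)\ge\beta_{L_0}(E)-o(1)J_{L_0}(E)$ cannot hold uniformly in $E$: for $\Vol(L)<\Vol(L_0)$ the $A_X$-term alone contributes $-c\varepsilon\,A_X(E)$, which is not $\ge -o(1)J_{L_0}(E)$ when $A_X(E)$ is large. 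Your closing paragraph anticipates trouble from extreme values of $T_L(E)$, but that is not where the obstruction lies---it is the unbounded log discrepancy.

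The paper's remedy has two ingredients you are missing. First, it restricts to the ``potentially destabilizing'' set ${\cal D}^{\rm ud}_{L'}=\{F:\beta_{L'}(F)\le C_{L'}S_{L'}(F)\}$; divisors outside this set satisfy the desired inequality trivially, and membership in it furnishes an \emph{upper} bound for the combination
\[
A_X(F)\Vol(L')+\int_0^\infty n\langle(L'-xF)^{n-1}\rangle\cdot\bigl(K_X+\tilde s(L')L'\bigr)\,dx
\]
in terms of $S_{L'}(F)$. Second, the paper never compares the $K_X$-integral termwise (which would fail, since $K_X$ need not be nef so monotonicity of positive intersection products does not apply to $\langle(L-xF)^{n-1}\rangle\cdot K_X$); instead it rewrites $K_X=(K_X+\tilde s(L')L')-\tilde s(L')L'$ and handles the nef piece jointly with the $A_X$-term, using precisely the bound above. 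Your proposal to treat ``the $K_X$- and $L^{n-1}$-terms'' by ``analogous intersection-number comparisons'' glosses over both of these points.
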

Together with LeBrun-Simanca's openness, our result fits the expectation of YTD conjecture. 

A main difficulty of Theorem \ref{thm 1.1} is to control the difference of the derivative part in the expression of $\beta$-invariant for two nearby ample divisors.
It is hard to control the difference for all prime divisors in general.
In addition, the log discrepancy has no control generally.
By considering the derivative part of $\beta$-invariant together with the log discrepancy, 
we obtain a partial control of $\beta$-invariant (see Theorem \ref{lower bdd theorem}), which is enough to show our main theorem.

As an immediate application of main theorem, we obtain
\begin{corollary}[see Theorem \ref{cont of u.v.s.t.}]
\label{cor 1.2}
    For a normal projective variety $X$,
	the uniformly valuative stability threshold
	\begin{equation*}
		{\rm Amp}(X)\ni L\mapsto \zeta(L)\in{\bb R}
	\end{equation*}
	is continuous on the ample cone ${\rm Amp}(X)$ $($see Definition \ref{u.v.s.t.} for $\zeta(L)$$)$.
\end{corollary}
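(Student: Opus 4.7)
The plan is to deduce continuity of $\zeta$ from the quantitative estimates that underlie Theorem \ref{thm 1.1}. Recall that $(X,L)$ is uniformly valuatively stable precisely when $\zeta(L)>0$, with $\zeta(L)$ defined as the infimum, taken over all dreamy prime divisors $F$ over $X$, of a normalized $\beta$-invariant ratio of the form $\beta_L(F)/\|F\|_L$, where $\|F\|_L$ denotes the seminorm appearing in Definition \ref{def of valuative stability}. To prove continuity at a point $L_0 \in {\rm Amp}(X)$, I would aim for a two-sided, $F$-uniform estimate
\[
\left|\frac{\beta_L(F)}{\|F\|_L} - \frac{\beta_{L_0}(F)}{\|F\|_{L_0}}\right| \le \eta(L,L_0),
\]
with $\eta(L,L_0)\to 0$ as $L\to L_0$; taking the infimum in $F$ then gives $|\zeta(L)-\zeta(L_0)|\le \eta(L,L_0)$.

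First I would establish lower semi-continuity. The partial control provided by Theorem \ref{lower bdd theorem} should yield a one-sided inequality
\[
\frac{\beta_L(F)}{\|F\|_L} \ge \frac{\beta_{L_0}(F)}{\|F\|_{L_0}} - \eta(L,L_0),
\]
valid uniformly over all dreamy $F$ once $L$ is close enough to $L_0$. This is essentially what is used to prove the openness theorem; passing to the infimum gives $\zeta(L)\ge \zeta(L_0) - \eta(L,L_0)$, and hence $\liminf_{L\to L_0}\zeta(L)\ge \zeta(L_0)$. Upper semi-continuity is then obtained by the same estimate with the roles of $L$ and $L_0$ swapped: for $L$ sufficiently close to $L_0$ one deduces $\zeta(L_0)\ge \zeta(L)-\eta(L_0,L)$, so $\limsup_{L\to L_0}\zeta(L)\le \zeta(L_0)$. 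Combining the two inequalities gives continuity of $\zeta$ at every $L_0\in {\rm Amp}(X)$.

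The main obstacle, as already pointed out in the introduction, is the uniform-in-$F$ control of the difference $\beta_L(F)-\beta_{L_0}(F)$: it contains terms of the form $A_X(F)\cdot(L^n-L_0^n)$ together with derivatives of the volume functional $t\mapsto {\rm vol}(L-tF)$, and the log discrepancy $A_X(F)$ is a priori unbounded as $F$ ranges over all dreamy divisors. The crucial point is therefore to combine the log-discrepancy contribution with the derivative of the volume term and only then normalize by $\|F\|_L$; this is precisely what Theorem \ref{lower bdd theorem} is designed to do, and it provides the partial but $F$-uniform control one needs.

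A secondary technical point is that the normalization $\|\cdot\|_L$ itself depends on $L$, so before comparing the two ratios I would first verify that the seminorm varies continuously in $L$. This can be read off from the continuity of $L\mapsto {\rm vol}(L-tF)$ together with the integral expressions for $\|F\|_L$, and it is what allows the clean passage to the infimum in the last step. Once this is in place, the corollary follows at once from the main estimate, and the argument also extends to the transcendental K\"ahler cone under the same framework used in the rest of the paper.
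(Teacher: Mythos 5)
Your proposal is correct and follows essentially the same route as the paper: a one-sided, $F$-uniform lower bound on $\beta_{L'}(F)-\beta_L(F)$ from Theorem \ref{lower bdd theorem}, combined with the two-sided comparison of $S_L$ and $S_{L'}$ (Lemma \ref{S estimate lemma}), followed by passing to the infimum and then symmetrizing by exchanging the roles of $L$ and $L'$. The only ingredients you leave implicit --- that Theorem \ref{lower bdd theorem} applies only to $F\in{\cal D}^{\rm ud}_{L'}$, so one first needs the identity $\zeta=\zeta^{\rm ud}$ before taking the infimum, and a shift by a constant $c_L$ to handle a possibly negative $\zeta(L)$ when converting a multiple of $S_L(F)$ into one of $S_{L'}(F)$ --- are exactly the small technical steps the paper supplies.
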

The invariant $\zeta$ is motivated by $\delta$-invariant since $\delta-1$ can be viewed as the stability threshold (in the sense of Definition \ref{u.v.s.t.}) of the original $\beta$-invariant.
Corollary \ref{cor 1.2} gives a similar result with Zhang \cite{Zh21a} for projective varieties.
According to the expression of $\beta$-invariant, we do not have a canonical formulation to define its corresponding $\delta$-invariant for polarised varieties.
Studying the invariant $\zeta$ is a good candidate to test valuative stability.

The definition of cscK metrics does not need a polarization. 
In \cite{DR17} and \cite{SD18}, they define independently the notion of K-stability for the transcendental K\"ahler classes.
It is natural to extend the valuative stability to any K\"ahler class of compact K\"ahler manifolds (see Definition \ref{def of valuative stability for transc}).

Due to some well-known results about analytic geometry, it is straightforward to see that our argument for the algebraic class can also work for the K\"ahler class on projective manifolds. 
We state it as follows,
\begin{theorem}[see Theorem \ref{UVs open Kahler}]
	For a projective manifold $X$, the uniformly valuative stability locus 
	\begin{equation*}
		\widehat{\rm UVs}:=\{\alpha\in{\cal K}|(X,\alpha)\text{ is uniformly valuatively stable}\}
	\end{equation*}
	is an open subcone of the K\"ahler cone $\cal K$.
\end{theorem}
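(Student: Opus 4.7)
The plan is to transplant the proof of Theorem~\ref{UVs open} from the ample cone to the full K\"ahler cone $\mathcal{K}$, using that its essential ingredients are cohomological rather than algebraic. Every quantity entering the expression of the $\beta$-invariant---the volume $\alpha^{n}$, the intersection $K_{X}\cdot\alpha^{n-1}$, and the slope-type integral $\int_{0}^{\tau}(\mu^{*}\alpha - tF)^{n}\,dt$ computed on a log resolution $\mu\colon Y\to X$---depends polynomially, hence continuously, on $\alpha\in H^{1,1}(X,\mathbb{R})$, while the log discrepancy $A_{X}(F)$ is intrinsic to $X$ and independent of the class. Consequently $\beta_{\alpha}(F)$ is well defined for every K\"ahler class $\alpha$ and every prime divisor $F$ over $X$, matching Definition~\ref{def of valuative stability for transc}.

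I would then revisit the quantitative estimate of Theorem~\ref{lower bdd theorem}, which is the technical heart of the ample case, and observe that its proof invokes only multilinearity of intersection products on $H^{1,1}(X,\mathbb{R})$, continuity of the volume function on the big cone, and a pairing of the derivative term of $\beta$ with the log discrepancy that is uniform in $F$. All three survive in the transcendental setting verbatim, yielding a uniform bound on $|\beta_{\alpha'}(F) - \beta_{\alpha}(F)|/A_{X}(F)$ for $\alpha'$ close to $\alpha$ in $\mathcal{K}$. Combining this with the strict positivity of the normalised $\beta$ at a uniformly valuatively stable $\alpha$ produces the openness of $\widehat{\rm UVs}$ in $\mathcal{K}$, and as a byproduct the continuity of the threshold $\zeta(\alpha)$ on $\mathcal{K}$, matching Corollary~\ref{cor 1.2}.

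The main obstacle is conceptual rather than technical: the original algebraic definition passes through test configurations and their Donaldson--Futaki invariants, both of which require $\alpha$ to be rational, and dreamyness of a divisor is phrased in terms of finite generation of a filtration on a section ring that does not exist when $\alpha$ is transcendental. The way around this is to take the valuation-theoretic, intersection-formula description of $\beta_{\alpha}(F)$ as the primary definition in the K\"ahler setting---this is precisely the role of Definition~\ref{def of valuative stability for transc}---and to exploit that on a projective manifold the rational ample classes are dense in $\mathcal{K}$, so that the class of test divisors witnessing uniform stability can be selected from any nearby rational polarization and remains effective for all $\alpha$ in a small neighbourhood. Once this identification is in place, the openness argument of Theorem~\ref{UVs open} runs unchanged.
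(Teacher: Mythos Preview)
Your proposal misses the central analytic ingredient that the paper isolates in Section~\ref{Valuative stability for transcendental classes}, and in doing so mis-describes the structure of $\beta_\alpha(F)$. The class $\pi^*\alpha - x[F]$ on the birational model $Y$ is big but in general not nef, so its volume is \emph{not} the top self-intersection $(\pi^*\alpha - xF)^n$; the integrand in $S_\alpha(F)$ and in the derivative term of $\beta_\alpha(F)$ is therefore not a polynomial in $\alpha$. What actually appears is the positive (movable) intersection product $\langle(\pi^*\alpha - x[F])^{n-1}\rangle$ of \cite{BDPP12,BEGZ10}, which is only super-additive and homogeneous in each variable, explicitly \emph{not} multilinear (see the remark after Proposition~\ref{cont of positive intersection product}). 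The estimates in the proof of Theorem~\ref{lower bdd theorem} exploit exactly this monotonicity and homogeneity, together with the integration-by-parts identity of Lemma~\ref{integration by part lemma}; for K\"ahler classes that identity is not formal but requires the $C^1$-differentiability of the volume on the big cone of a projective manifold, which is Witt Nystr\"om's theorem \cite{WN19}. Without invoking that result (or an equivalent), formula~(\ref{integ by part Kahler}) is unavailable and the key inequalities (\ref{lower bdd of A plus pos inters term})--(\ref{natural lower bdd}) do not transfer.

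Two further points. First, the quantitative estimate in Theorem~\ref{lower bdd theorem} controls $\beta_{L'}(F)-\beta_L(F)$ from below by $-f(\varepsilon)S_{L'}(F)$, not by a multiple of $A_X(F)$; the log discrepancy is unbounded over ${\rm PDiv}_{/X}$ and the whole point of restricting to the set $\mathcal{D}^{\rm ud}_{L'}$ is to trade control of $A_X(F)$ for control by $S_{L'}(F)$ via (\ref{upper bound of A+R}). Second, the density of rational ample classes plays no role: once $\beta_\alpha(F)$ is defined intrinsically by the positive intersection product, the argument runs directly for any K\"ahler class and no approximation by polarizations is needed. In short, the transplant you describe does work, but only after you replace ``polynomial'' by ``positive intersection product'' and supply Witt Nystr\"om's differentiability theorem as the bridge.
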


This article is organized as follows.
In Section \ref{Preliminaries}, we recall basic theories of the volume and the positive intersection product. 
Moreover, we collect some well-known properties and some useful theorems.
In Section \ref{Valuative stability}, we recall these definitions of the $\beta$-invariant and the valuative stability of polarized varieties. Moreover, we formulate our main theorem.
In Section \ref{Proof of openness of valuative stability}, we finish the proof of the main theorem.
In section \ref{Continuity of uniformly valuative stability threshold},
    as an application of the main theorem, we prove the continuity of uniformly valuative stability threshold.
In Section \ref{Valuative stability for transcendental classes}, 
we compile some basic theories of the analytic volume and positive intersection product. Moreover, we state some related facts and extend our result to the K\"ahler cone of projective manifolds.
In Section \ref{Further questions}, we propose some interesting further questions.

\subsection*{Acknowledgement}
	I would like to thank my supervisor Akito Futaki for constant help, his guidance and teaching over many years. 
	I am grateful to Jian Xiao for telling me the differentiability result of Witt Nystr\"om \cite{WN19} and Kewei Zhang for helpful discussions.  
	I wish to thank the anonymous referees for their many helpful comments.
	
\subsection*{Notation}
We work throughout over the complex number $\bb C$.
A \textit{variety} is always assumed to be a connected, reduced, separated and of finite type scheme over $\mathrm{Spec}\ \mathbb{C}$.
Unless we say specifically, in this article, we fix $X$ as an $n$-dimensional normal projective $\mathbb{Q}$-Gorenstein varieties, and fix all divisors as Cartier divisors.
For the convenience of writing, we do not distinguish between divisors and line bundles.

\section{Preliminaries}
\label{Preliminaries}
\subsection{Volume function}
In this subsection, we review some of the standard facts on the volume function.

The N\'eron-Severi space $N^1(X)$ is the real vector space  of  numerical equivalence classes of $\R$-Cartier divisors on $X$.
In general, the N\'eron-Severi space is denoted by $N^1(X)_{\bb R}$. But for simplicity, we denote it by $N^1(X)$. 
For any Cartier divisor $D$, the \textit{volume} of $D$ is defined to be
\begin{equation}
\label{def of volume}
	\Vol(D)=\limsup_{m\rightarrow\infty}\frac{h^0(X,mD)}{m^n/n!}.
\end{equation}
For any natural number $a>0$, we have
\begin{equation*}
	\Vol(aD)=a^n\Vol(D).
\end{equation*}
It follows that the volume for any $\Q$-Cartier divisor $D$ to be
\begin{equation*}
	\Vol(D)=\frac{1}{a^n}\Vol(aD),
\end{equation*}
for some $a\in{\bb N}$, such that $aD$ is Cartier divisor. This is independent of the choice of $a$.
The volume of a $\Q$-Cartier divisor depends only on its numerical equivalence class.
Thus, the volume function can be descended to $N^1(X)_{\Q}$.
Then the
volume function extends continuously to $N^1(X)$.
The volume satisfies the homogeneous property, i.e. 
\begin{equation*}
	\Vol(aD)=a^n\Vol(D).
\end{equation*}
for any $a>0$ and any $D$ in $N^1(X)$.

We recall some definitions of positivity of $\bb R$-divisors. 
An $\bb R$-divisor $D$ in $N^1(X)$ is called \textit{nef} if the intersection number $L\cdot C$ is nonnegative for any curve $C$ on $X$. 
The volume of a nef $\bb R$-divisor $D$ is equal to the top self-intersection number $D^n$.
All nef classes in $N^1(X)$ form a convex cone, called the \textit{nef cone}, denoted by ${\rm Nef}(X)$, whose interior is called the \textit{ample cone}, denoted by ${\rm Amp}(X)$. 
An $\R$-divisor $D$ in $N^1(X)$ is called \textit{big} if 
\begin{equation*}
	\Vol(D)>0.
\end{equation*}
All big classes in $N^1(X)$ form a convex open cone, called the \textit{big cone}, denoted by ${\rm Big}(X)$,  whose closure is called the \textit{pseudo-effective $($psef for short$)$ cone}.
For any two big $\R$-divisors $D$ and $B$, one can obtain
\begin{equation*}
	\Vol(D+B)\geq\Vol(D).
\end{equation*}
For more details of the volume function, we refer to the standard reference \cite{L04}.

\subsection{Positive intersection product}
\label{Positive intersection product}
In this subsection, we present some preliminaries about the positive intersection product. 
We follow the notions of \cite{BFJ09}.
References to this subsection are \cite{BDPP12}, \cite{BFJ09}, \cite[Section 7]{Sh03}, \cite{D-F20}.

In general, the volume of a big divisor is not equal to its top self-intersection number. 
But it can be computed as the movable intersection number (see \cite[Chapter 11]{L04}) by Fujita's approximation theorem.
In other words, for any big divisor $D$, let $\pi_m:X_m\rightarrow X$ be the resolution of base locus $\mathfrak{b}(|mD|)$ with the exceptional divisor $E_m$ and set $D_m=\pi_m^*D-\frac{1}{m}E_m$,
then
\begin{equation*}
	\Vol(D)=\lim_{m}D_m^{n}.
\end{equation*} 
To compute the volume of a big divisor, in \cite{BFJ09} the authors introduce a valid notion, the so-called \textit{positive intersection product}.
Next we recall the notion (also see \cite{BFJ09}, \cite{BDPP12} for details).

Recall that the Riemann-Zariski space of $X$ is the locally ringed space defined by
\begin{equation*}
	\mathfrak{X}:=\varprojlim_{\pi} X_{\pi}\,,
\end{equation*}
where $X_{\pi}$ runs over all birational models of $X$ with the birational morphism $\pi:X_{\pi}\rightarrow X$.
Here the projective limit is taken in the category of locally ringed spaces.
We do not use the theory of Riemann-Zariski spaces in an essential way in this paper. 
We refer to \cite{ZS75,Va00} for more discussions on the structure of this space.
\begin{definition}[{\cite[Definition 1.1]{BFJ09}}]
For any integer $0\leq p\leq n$,
	\begin{enumerate}[-]
		\item the space of $p$-codimensional Weil classes on the Riemann-Zariski space $\mathfrak{X}$ is defined as 
		    \begin{equation*}
		    	N^p(\mathfrak{X})
		    	:=\varprojlim_{\pi}N^p(X_\pi)\,,
		    \end{equation*}
		    with arrows defined by push-forward, where $N^p(X_\pi)$ is the real vector space of numerical equivalence classes of codimension $p$-cycles (see \cite[Chapter 19]{Fu97}). 
		 \item the space of $p$-codimensional Cartier classes on $\mathfrak{X}$ is defined as
		     \begin{equation*}
		     	CN^p(\mathfrak{X}):=\varinjlim_{\pi}N^p(X_\pi)\,,
		     \end{equation*}
		    with arrows defined by pullback.
	\end{enumerate}
\end{definition}
By definition, a Weil class $\alpha$ in $N^p(\mathfrak{X})$ is given by its \textit{incarnations} $\alpha_\pi$ in $N^p(X_\pi)$ on each smooth birational model of $X$, 
satisfying 
$$\nu_{*}(\alpha_{\pi'})=\alpha_{\pi''}$$
for any birational morphism $\nu:X_{\pi'}\rightarrow X_{\pi''}$ with $\pi'=\pi''\circ \nu$.

Further for each $\pi$, given a class $\alpha$ in $N^p(X_\pi)$, one can extend it to a Cartier class by pullback of it.
Thus, we have the natural injection
\begin{equation*}
	N^p(X_\pi)\hookrightarrow CN^p(\mathfrak{X}),
\end{equation*}

When $p=1$, we refer to the space $CN^1(\mathfrak{X})$ as the \textit{N\'eron-Severi space} of $\mathfrak{X}$. Its elements are the so-called Shokurov's b-divisors.

In the sequel, we use the notation $\alpha\geq0$ for a psef class $\alpha$ in $N^p(X)$ (see \cite{Fu97}).
We consider positive Cartier classes in $\mathfrak{X}$.
For a birational morphism $\nu:V'\rightarrow V$, a class $\alpha$ in $N^1(V)$ is nef (resp. psef, big) if and only if $\nu^*\alpha$ is nef (resp. psef, big).
Therefore, one can extend these definitions to the Riemann-Zariski space.

\begin{definition}[{\cite[Definition 1.6]{BFJ09}}]
	A Cartier class $\alpha$ in $CN^1(\mathfrak{X})$ is called nef (resp. psef, big) if its incarnation $\alpha_\pi$ is nef (resp. psef, big) for some $\pi$.
\end{definition}
On a smooth projective variety $V$, for any $p$-classes $\alpha_1,\ldots,\alpha_p$ in $N^1(V)$, the intersection product $\alpha_1\cdots\alpha_p$ belongs to $N^p(V)$ (see \cite{Fu97}).
Further for any birational morphism $\nu:V'\rightarrow V$, one has 
$\nu^*\alpha_1\cdots\nu^*\alpha_p
=\nu^*(\alpha_1\cdots\alpha_p)
$,
see \cite[Chapter 19]{Fu97}.
One can define the intersection product of $p$-Cartier classes $\alpha_1,\ldots,\alpha_p$ in $CN^1(\mathfrak{X})$, 
which have a common determination $X_\pi$, as the Cartier class in $CN^p(\mathfrak{X})$ determined by $\alpha_{1,\pi}\cdots\alpha_{p,\pi}$. 
\begin{definition}[{\cite[Definition 2.5]{BFJ09}}]
	For any big classes $\alpha_1,\ldots,\alpha_p$ in  $CN^1(\mathfrak{X})$, their \textit{positive intersection product} 
	\begin{equation*}
\langle\alpha_1\cdots\alpha_p\rangle\in N^p(\mathfrak{X})
	\end{equation*}
	 is defined as the least upper bound of the set of classes
	\begin{equation*}
		(\alpha_1-D_1)\cdots(\alpha_p-D_p) \in N^p(\mathfrak{X})
	\end{equation*}
	where $D_i$ is an effective Cartier $\Q$-divisor on $\mathfrak{X}$ such that $\alpha_i-D_i$ is nef.
\end{definition} 
\begin{remark}
\label{analytic pos inters}
	In \cite[Theorem 3.5]{BDPP12}, the authors give an analytic definition of the positive intersection product (they call it as the movable intersection product) for K\"ahler manifolds.
	For any \textit{big classes} $\alpha_1,\ldots,\alpha_p$ on the K\"ahler manifold $V$, in which the big class means that each $\alpha_j$ can be represented by a \textit{K\"ahler current} $T$, i.e. a closed positive $(1,1)$-current $T$ such that $T\geq\theta\omega$ for some smooth Hermitian metric $\omega$ and a small constant $\theta>0$,
	one defines
	\begin{equation*}
		\langle\alpha_1\cdots\alpha_p\rangle
		:=\sup_{T_j,\pi}\pi_{*}(\gamma_1\wedge\ldots\wedge\gamma_p)
	\end{equation*}
	where $T_j\in\alpha_j$ is a K\"ahler current with logarithmic poles, i.e. there is a modification $\pi_j:V_j'\rightarrow V$ such that $\pi_j^*T_j=[E_j]+\gamma_j$ for some effective $\Q$-divisor $E_j$ and closed semi-positive form $\gamma_j$. 
	Here we take a common resolution $\pi:V'\rightarrow V$, and write
	\begin{equation*}
		\pi^*T_j=[E_j]+\gamma_j.
	\end{equation*}
\end{remark}
\begin{definition}[{\cite[Definition 2.10]{BFJ09}}]
	For any psef classes $\alpha_1,\ldots,\alpha_p$ in  $CN^1(\mathfrak{X})$,
	their positive intersection product
	\begin{equation*}
     \langle\alpha_1\cdots\alpha_p\rangle\in N^p(\mathfrak{X})
	\end{equation*}
	is defined as the limit
	\begin{equation*}
		\lim_{\varepsilon\rightarrow0^+} \langle(\alpha_1+\varepsilon\gamma)\cdots(\alpha_p+\varepsilon\gamma)\rangle,
	\end{equation*}
	where $\gamma$ in $CN^1(\mathfrak{X})$ is any big Cartier class.
\end{definition}
This definition is independent of the choice of the big class $\gamma$ (see \cite[Definition 2.10]{BFJ09}).

For any big $\bb R$-divisor $D$ in $N^1(X)$, we have
\begin{equation*}
	\Vol(D)=\langle D^n\rangle,
\end{equation*}
also see \cite[Definition 3.2]{BDPP12} or \cite[Definition 1.17]{BEGZ10} for an analytic definition.

An interesting fact about the volume function on the big cone, due to Boucksom-Favre-Jonsson \cite{BFJ09}, is stated as follows,
\begin{theorem}[{\textup{\cite[Theorem A]{BFJ09}}}].
\label{differentiable of volume}
	The volume function is $C^1$-differentiable on the big cone of $N^1(X)$. 
	If $\alpha\in N^1(X)$ is big and $\gamma\in N^1(X)$ is arbitrary, then
	\begin{equation}
	\label{diff of vol}
		\frac{d}{dt}\Big|_{t=0}\Vol(\alpha+t\gamma)
		=n\langle\alpha^{n-1}\rangle\cdot\gamma.
	\end{equation}
\end{theorem}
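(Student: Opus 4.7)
The plan is to establish the derivative formula first (at a single point and in a single direction), and then to upgrade pointwise differentiability to $C^1$-regularity by continuity of $\alpha\mapsto\langle\alpha^{n-1}\rangle$. Because $\mathrm{Vol}$ is homogeneous of degree $n$ and, by the Brunn-Minkowski inequality $\mathrm{Vol}(\alpha+\beta)^{1/n}\geq\mathrm{Vol}(\alpha)^{1/n}+\mathrm{Vol}(\beta)^{1/n}$, the function $\mathrm{Vol}^{1/n}$ is concave on the big cone. Concavity forces the one-sided directional derivatives to exist everywhere; the whole game is therefore to identify them with $n\langle\alpha^{n-1}\rangle\cdot\gamma$ and to check that the right and left derivatives agree. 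By writing an arbitrary $\gamma$ as a difference of ample classes, I may assume $\gamma$ is nef, and by concavity I may work with the right derivative at $t=0^+$.

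For the lower bound on the right derivative, I would use Fujita approximation to write, on suitable birational models $\pi_m\colon X_m\to X$, a decomposition $\pi_m^{*}\alpha=\beta_m+E_m$ with $\beta_m$ nef and big, $E_m$ effective $\mathbb{Q}$-Cartier, and $\beta_m^{n}\to\mathrm{Vol}(\alpha)$. For small $t\geq 0$ and $\gamma$ nef, $\pi_m^{*}(\alpha+t\gamma)-E_m=\beta_m+t\pi_m^{*}\gamma$ is nef, so by birational invariance of volume and the monotonicity $\mathrm{Vol}(D+E)\geq\mathrm{Vol}(D)$ one gets
\begin{equation*}
\mathrm{Vol}(\alpha+t\gamma)\;\geq\;\bigl(\beta_m+t\pi_m^{*}\gamma\bigr)^{n}.
\end{equation*}
The right-hand side is polynomial in $t$, with derivative $n\beta_m^{n-1}\cdot\pi_m^{*}\gamma$ at $t=0$. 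Comparing with $\mathrm{Vol}(\alpha)$ and letting $m\to\infty$, the definition of the positive intersection product as the supremum over nef approximations yields
\begin{equation*}
\liminf_{t\to 0^+}\frac{\mathrm{Vol}(\alpha+t\gamma)-\mathrm{Vol}(\alpha)}{t}\;\geq\;n\,\langle\alpha^{n-1}\rangle\cdot\gamma.
\end{equation*}

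For the upper bound I would use the concavity of $\mathrm{Vol}^{1/n}$ together with the orthogonality property of divisorial Zariski (or $\sigma$-) decompositions, which is the real technical heart of the argument. Concretely, for $\alpha$ big, an element $\xi$ of the superdifferential of $\mathrm{Vol}$ at $\alpha$ must satisfy $\mathrm{Vol}(\alpha+s\eta)\leq\mathrm{Vol}(\alpha)+s\,\xi\cdot\eta$ for all $\eta$ and small $s$; testing on Fujita approximations and using that the negative part of a Zariski-type decomposition is orthogonal to the positive part forces $\xi=n\langle\alpha^{n-1}\rangle$. This step is where I expect the main obstacle: making the orthogonality argument work in the $b$-divisor/positive-intersection-product setting on a singular $X$ requires careful passage to smooth models and control of how $\langle\alpha^{n-1}\rangle$ is incarnated on each $X_\pi$.

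Once two-sided differentiability is established with derivative $n\langle\alpha^{n-1}\rangle\cdot\gamma$, $C^1$-regularity reduces to continuity of the map $\alpha\mapsto\langle\alpha^{n-1}\rangle$ from $\mathrm{Big}(X)$ to $N^{n-1}(\mathfrak{X})$. This continuity is a consequence of the monotonicity and homogeneity in the definition via Fujita approximations: given a small perturbation $\alpha'$ of $\alpha$, sandwich $\alpha'$ between $(1-\varepsilon)\alpha$ and $(1+\varepsilon)\alpha$ after subtracting/adding a small ample class, and use that positive intersection products are monotone and scale by $(1\pm\varepsilon)^{n-1}$. Since the linear functional $\gamma\mapsto n\langle\alpha^{n-1}\rangle\cdot\gamma$ then depends continuously on $\alpha$, the differential varies continuously on the big cone, completing the proof.
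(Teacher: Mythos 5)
First, note that the paper itself offers no proof of this statement: Theorem \ref{differentiable of volume} is imported verbatim as Theorem A of \cite{BFJ09}, so there is no in-paper argument to compare yours against. Your outline does reproduce the strategy of the actual proof in \cite{BFJ09}: concavity of $\Vol^{1/n}$ on the big cone (from the Khovanskii--Teissier/Brunn--Minkowski inequality) to guarantee one-sided directional derivatives, Fujita approximation to get the lower bound $\liminf_{t\to 0^+}t^{-1}(\Vol(\alpha+t\gamma)-\Vol(\alpha))\geq n\langle\alpha^{n-1}\rangle\cdot\gamma$ for $\gamma$ nef, and continuity plus monotonicity and homogeneity of $\alpha\mapsto\langle\alpha^{n-1}\rangle$ on the big cone to upgrade pointwise differentiability to $C^1$. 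Those parts are sound.

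The genuine gap is exactly where you flag it: the upper bound. The estimate $\Vol(\alpha)-\Vol(\alpha-\gamma)\leq n\langle\alpha^{n-1}\rangle\cdot\gamma$ for $\gamma$ pseudo-effective --- equivalently, the identification of the superdifferential of $\Vol$ at $\alpha$ with the single functional $n\langle\alpha^{n-1}\rangle$ --- is not a formal consequence of concavity together with Fujita approximation. It requires the orthogonality estimate $A_m^{n-1}\cdot E_m\to 0$ for Fujita approximations $\pi_m^*\alpha=A_m+E_m$, and establishing that estimate algebraically is the entire technical content of \cite{BFJ09} (their ``key estimate''); as written, you assert it rather than prove it, so the superdifferential could a priori be a nontrivial convex set and $\Vol$ merely one-sided differentiable. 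A second, smaller issue: reducing to nef $\gamma$ by writing $\gamma=A-B$ with $A,B$ ample does not immediately yield two-sided differentiability in the direction $\gamma$ from one-sided statements in the directions $A$ and $B$ separately; one needs the key estimate for pseudo-effective perturbations in both directions (or a joint-continuity argument in two parameters) to close this. Since the present paper uses the theorem as a cited black box, the citation is the intended ``proof,'' and your sketch should be read as a correct roadmap to \cite{BFJ09} rather than a self-contained argument.
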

We collect some facts about the positive intersection product as follows, for using later,
\begin{proposition}[{\textup{\cite[Proposition 2.9, Corollary 3.6]{BFJ09}}}]
\label{cont of positive intersection product}
\
	\begin{enumerate}[$(\rm i)$]
		\item The positive intersection product
	           is symmetric, homogeneous of degree $1$, and super-additive in each variable.
	          Moreover, it is continuous on the $p$-fold product of the big cone of $CN^1(\mathfrak{X})$.
	    \item For any psef class $\alpha$ in $CN^1(\mathfrak{X})$, one obtain
	        \begin{equation*}
	        	\langle\alpha^n\rangle
	        	=\langle\alpha^{n-1}\rangle\cdot\alpha.
	        \end{equation*}
	\end{enumerate}
\end{proposition}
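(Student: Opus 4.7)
The plan is to verify the algebraic properties of part (i) directly from the supremum characterization on the big cone, extend continuity on the big cone by a Lipschitz-type estimate, and then derive part (ii) from Fujita approximation together with an orthogonality estimate.

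For part (i), on the big cone of $CN^1(\mathfrak{X})$, symmetry and degree-one homogeneity in each entry are immediate from the definition, since the family of admissible approximants $(\alpha_1 - D_1)\cdots(\alpha_p - D_p)$ is symmetric in its arguments and rescales correctly under $\alpha_i \mapsto \lambda \alpha_i$ (for positive rationals first, then positive reals by density). Super-additivity in the first entry uses that if $\alpha_1 - D_1$, $\alpha_1' - D_1'$ and each $\alpha_i - D_i$ are nef on a common birational model $X_\pi$, then $(\alpha_1 + \alpha_1') - (D_1 + D_1')$ is also nef and
\begin{equation*}
\bigl((\alpha_1 + \alpha_1') - (D_1 + D_1')\bigr)\prod_{i\geq 2}(\alpha_i - D_i) = (\alpha_1 - D_1)\prod_{i\geq 2}(\alpha_i - D_i) + (\alpha_1' - D_1')\prod_{i\geq 2}(\alpha_i - D_i),
\end{equation*}
so passing to the supremum yields the claim. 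Continuity on the big cone is the technical point: fix a reference big Cartier class $\omega$ and, for classes $\alpha_i'$ sufficiently close to $\alpha_i$, write $\alpha_i' - \alpha_i = \varepsilon_i \omega - \eta_i$ with $\eta_i$ small; then expand the positive intersection product multilinearly using super-additivity, homogeneity, and the trivial bound $\langle\beta_1\cdots\beta_p\rangle \leq \beta_1\cdots\beta_p$ when each $\beta_i$ is already nef, to produce a local estimate of the form $|\langle\prod\alpha_i'\rangle - \langle\prod\alpha_i\rangle| = O(\varepsilon)$. For psef classes this is automatic from the $\varepsilon$-perturbation definition.

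For part (ii), I would first prove the identity for a big class $\alpha$ by Fujita approximation. Pass to a model $\pi_m : X_m \to X$ with $\pi_m^*\alpha = P_m + N_m$, where $P_m$ is nef, $N_m$ is effective, $P_m^n \to \langle\alpha^n\rangle = \Vol(\alpha)$, and $\pi_{m*}P_m^{n-1}$ converges to $\langle\alpha^{n-1}\rangle$ in $N^{n-1}(\mathfrak{X})$. The projection formula then gives
\begin{equation*}
\langle\alpha^{n-1}\rangle\cdot\alpha = \lim_m P_m^{n-1}\cdot\pi_m^*\alpha = \lim_m P_m^n + \lim_m P_m^{n-1}\cdot N_m = \langle\alpha^n\rangle + \lim_m P_m^{n-1}\cdot N_m,
\end{equation*}
so the identity reduces to the orthogonality relation $P_m^{n-1}\cdot N_m \to 0$. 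The psef case of (ii) then follows by applying the big-class identity to $\alpha + \varepsilon\gamma$ for a fixed big Cartier class $\gamma$ and letting $\varepsilon \to 0^+$, invoking the continuity established in part (i).

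The main obstacle is the orthogonality $P_m^{n-1}\cdot N_m \to 0$, which is the genuinely deep input of the argument. Following BFJ09, one obtains it from a Khovanskii-Teissier-type inequality that bounds $(P_m^{n-1}\cdot N_m)^n$ by a combination of $P_m^n$ and the defect $\Vol(\alpha) - P_m^n$; since Fujita's approximation theorem guarantees this defect tends to $0$, so does the mixed intersection. Once this orthogonality is available, the rest of part (ii) is just the projection formula together with the continuity from part (i), and part (i) itself reduces to direct manipulations of the supremum defining $\langle\,\cdot\,\rangle$.
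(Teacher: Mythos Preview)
The paper does not supply its own proof of this proposition: it is quoted verbatim as a result of Boucksom--Favre--Jonsson \cite{BFJ09} (their Proposition~2.9 and Corollary~3.6), and the surrounding text only uses it as a black box. So there is no ``paper's proof'' to compare against beyond the cited reference.

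Your outline is a faithful summary of how the argument runs in \cite{BFJ09}. The algebraic properties in (i) are indeed immediate from the supremum definition, and the continuity on the big cone is established there by exactly the kind of sandwich estimate you describe. For (ii), the reduction to the orthogonality statement $P_m^{n-1}\cdot N_m\to 0$ via the projection formula is correct, and this orthogonality is precisely what \cite{BFJ09} prove as their Theorem~B/Corollary~3.6 using a Morse-type (Diskant/Khovanskii--Teissier) inequality. One small caveat: your claim that $\pi_{m*}P_m^{n-1}\to\langle\alpha^{n-1}\rangle$ in $N^{n-1}(\mathfrak{X})$ is not automatic from the definition as a supremum and in \cite{BFJ09} is itself deduced from the orthogonality, so be careful not to use it before the orthogonality is in hand. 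With that adjustment your sketch matches the source.
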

\begin{remark}
	In general, the positive intersection product is not multilinear, see \cite[Definition 1.17]{BEGZ10} for an analytic explanation. 
\end{remark}

\section{Valuative stability}
\label{Valuative stability}
In this section, we review the $\beta$-invariant given by \cite{D-L20} and state the definition of valuative stability.

In \cite{D-L20}, Dervan and Legendre compute the Donaldson-Futaki invariant of the test configuration associated to a dreamy divisor for a polarised variety and obtain a new numerical invariant, which generalizes Fujita's original $\beta$-invariant.
Then they show that valuative stability for dreamy divisors is equivalent to $K$-stability for integral test configurations. 
Here an integral test configuration means that its central fiber is integral.
In this paper, we do not involve the explicit definition of K-stability, refer to \cite{D-L20}, \cite{BHJ17}, \cite{Don02}.
~\\

Let $(X,L)$ be a polarized variety, let $\pi: Y\rightarrow X$ be a surjective birational morphism.
\begin{definition}
	A prime divisor $F\subset Y$ for some birational model $Y$ over $X$ is called a \textit{prime divisor over} $X$.
	Denote by ${\rm PDiv}_{/X}$ the set of all prime divisors over $X$.
\end{definition}
One can view $F$ as a divisorial valuation ${\rm ord}_F$ on $X$, defined on the function field of $X$.
In particular, we can always assume that $Y$ is smooth by taking a resolution of singularities. 
Since the information of the valuation associated to $F$, which we are interested in, does not change under the resolution of singularities,
see \cite[Remark 2.23]{KM98}.

\begin{definition}
	For any $F$ in ${\rm PDiv}_{/X}$, the \textit{log discrepancy} $A_X(F)$ is defined to be
\begin{equation*}
	A_X(F):=1+{\rm ord}_F(K_Y-\pi^*K_X).
\end{equation*}
\end{definition}
Note that the log discrepancy is well-defined, since we always assume that the canonical divisor $K_X$ is $\mathbb{Q}$-Cartier. 

For any prime divisor $F$ over $X$, one can define a subspace $H^0(X,mL-xF)\subset H^0(X,mL)$ by the identifications
\begin{equation*}
H^0(X,mL-xF):=H^0(Y,m\pi^*L-xF)\subset H^0(Y,m\pi^*L)\cong H^0(X,mL).
\end{equation*}
For any ample divisor $L$, one defines the \textit{slope} of $(X,L)$ to be
\begin{equation*}
	\mu(L):=\frac{-K_X\cdot L^{n-1}}{L^n}.
\end{equation*}

For any $F$ in ${\rm PDiv}_{/X}$, Dervan and Legendre define
\begin{equation}
\label{beta def}
	\beta_L(F)
	:=A_X(F)\Vol(L)+n\mu(L)\int_0^{+\infty}\Vol(L-xF)dx
	+\int_0^{+\infty}\Vol^{\prime}(L-xF)_{\cdot}K_Xdx,
\end{equation}
where 
\begin{equation*}
	\Vol(L-xF):=\Vol(\pi^*L-xF),
\end{equation*}
and
\begin{equation*}
\label{differ of vol} 
	\Vol^{\prime}(L-xF)_{\cdot}K_X
	:=\frac{d}{dt}\Big|_{t=0}\Vol(\pi^*L-xF+t\pi^*K_X).
\end{equation*}
For simplicity, we always omit $\pi^*$ in the above notations.
It follows from Theorem \ref{differentiable of volume} that the notation $\Vol^{\prime}(L-xF)_{\cdot}K_X$ is well-defined for any $L$ in ${\rm Big}(X)$ and $F$ in ${\rm PDiv}_{/X}$.
It is straightforward that $\beta_L(\cdot)$ depends only on the numerical equivalence class of $L$. 

There are three numerical invariants on the space of prime divisors over $X$.
Roughly speaking, these can be viewed as norms.
For any $F$ in ${\rm PDiv}_{/X}$, we set
\begin{equation*}
	S_L(F)
	:=\int_0^{+\infty}\Vol(L-xF)dx,              
\end{equation*}
and
\begin{equation*}
	j_L(F)
	:=\Vol(L)\tau_L(F)-S_L(F).
\end{equation*} 
where $\tau_L(F)$ is the \textit{pseudo-effective threshold} of $F$ with respect to $L$, defined by
\begin{equation*}
	\tau_L(F)
	:=\sup\{x\in{\bb R} |\Vol(L-xF)>0 \}.
\end{equation*}
Note that our notation $S_L(F)$ is different from the usual one, which is equal to $S_L(F)/\Vol(L)$. 
But just for convenience, we use this notation.

\begin{lemma}
\label{relation of norms}
When $L$ is ample,
	for any prime divisor $F$, $\tau_L(F)$, $S_L(F)$ and $j_L(F)$ have the following relations
\begin{equation}
\label{rela of tau and j}
	\frac{1}{n+1}\Vol(L)\tau_L(F)\leq j_L(F)
	\leq\frac{n}{n+1}\Vol(L)\tau_L(F),
\end{equation}
and
\begin{equation}
\label{rela of tau and S}
	\frac{1}{n+1}\Vol(L)\tau_L(F)\leq S_L(F) \leq
	\frac{n}{n+1} \Vol(L)\tau_L(F).
\end{equation}
\end{lemma}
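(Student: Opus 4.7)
The plan is to prove the bounds for $S_L(F)$; the bounds for $j_L(F)$ then follow immediately since, by definition, $j_L(F) = \Vol(L)\tau_L(F) - S_L(F)$. Abbreviate $\tau := \tau_L(F)$ and set $v(x) := \Vol(L - xF)$ on $[0,\tau]$, so that $v(0) = \Vol(L)$, $v(\tau) = 0$, and $v$ is non-increasing by monotonicity of volume under effective divisors.

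For the lower bound $S_L(F) \geq \tfrac{1}{n+1}\Vol(L)\tau$, I invoke the Brunn--Minkowski / log-concavity inequality for volumes of big divisors (implicit in the positive-intersection framework of \cite{BFJ09}): the function $v(x)^{1/n}$ is concave on $[0,\tau]$. Combined with $v(\tau) = 0$, the chord inequality gives $v(x)^{1/n} \geq v(0)^{1/n}(1 - x/\tau)$, whence $v(x) \geq v(0)(1 - x/\tau)^n$. Integrating over $[0,\tau]$ yields
\[
S_L(F) \;\geq\; v(0)\int_0^\tau (1 - x/\tau)^n\,dx \;=\; \frac{\Vol(L)\tau}{n+1}.
\]

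For the upper bound $S_L(F) \leq \tfrac{n}{n+1}\Vol(L)\tau$, it is equivalent to prove the lower bound $\int_0^\tau U(x)\,dx \geq \Vol(L)\tau/(n+1)$, where $U(x) := \Vol(L) - \Vol(L - xF)$. The key additional ingredient is that $U(x)^{1/n}$ is itself concave on $[0,\tau]$: via the Newton--Okounkov body $\Delta$ of $\pi^*L$ for a flag whose leading divisor is (the strict transform of) $F$, one has the dictionary $v(x) = n!\,\mathrm{vol}(\Delta \cap \{t_1 \geq x\})$ and $U(x) = n!\,\mathrm{vol}(\Delta \cap \{t_1 \leq x\})$, and the classical Brunn--Minkowski inequality for half-space truncations of convex bodies gives $1/n$-concavity of both quantities. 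Since $U(0) = 0$ and $U(\tau) = v(0)$, the chord inequality for $U^{1/n}$ vanishing at $0$ yields $U(x)^{1/n} \geq v(0)^{1/n}(x/\tau)$, hence $U(x) \geq v(0)(x/\tau)^n$. Integrating and rearranging $S_L(F) = \Vol(L)\tau - \int_0^\tau U(x)\,dx$ gives the required upper bound.

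The main subtlety is packaging the $1/n$-concavity of the complementary quantity $U$ in the paper's framework: while the concavity of $v(x)^{1/n}$ fits squarely into the Boucksom--Favre--Jonsson positive-intersection setup, the companion concavity is most transparent through Newton--Okounkov bodies, or equivalently via a Fujita-approximation argument slicing sections from below rather than from above. Once both $1/n$-concavities are in hand, the two inequalities are completely symmetric, and what remains is the elementary beta-integral computation recorded above.
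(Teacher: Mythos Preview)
Your argument is correct. For the lower bound on $S_L(F)$ you argue exactly as the paper does, via the $1/n$-concavity of $x\mapsto\Vol(L-xF)$; note that the paper's displayed pointwise inequality $\Vol(L-xF)\geq\Vol(L)(x/\tau_L(F))^n$ appears to be a typo --- your $(1-x/\tau)^n$ is the correct chord bound, though both happen to integrate to $\Vol(L)\tau/(n+1)$. For the upper bound the paper gives no argument of its own but simply invokes \cite[Proposition 2.1]{Fuj19}, whereas you supply a self-contained proof via Newton--Okounkov bodies; this is essentially the route taken in \cite[Proposition 3.11]{BJ20}, which the paper also mentions. Your key step --- the $1/n$-concavity of $U(x)=n!\,\mathrm{vol}(\Delta\cap\{t_1\leq x\})$ --- follows directly from Brunn--Minkowski once one observes that $\lambda(\Delta\cap\{t_1\leq a\})+(1-\lambda)(\Delta\cap\{t_1\leq b\})\subseteq\Delta\cap\{t_1\leq\lambda a+(1-\lambda)b\}$, so the concern you flag about ``packaging'' is not a genuine obstacle. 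The payoff of your approach is a visibly symmetric treatment of the two inequalities, which the paper's citation-based proof obscures.
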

The invariant $j_L(\cdot)$ can be viewed as a norm corresponding to non-Archimedean functional $J^{\rm NA}$ 
and $S_L(\cdot)$ corresponds to $I^{\rm NA}-J^{\rm NA}$, see \cite[Section 2]{D-L20}, \cite{BJ18} and \cite[Section 7.2]{BHJ17}.
The proof of this lemma is essentially same as that of Fujita \cite{Fuj19} in Fano $L=-K_X$ case, also see \cite[Proposition 3.11]{BJ20}.
\begin{proof}[Proof of Lemma \ref{relation of norms}]
	We only need to show (\ref{rela of tau and S}). 
	The first inequality of (\ref{rela of tau and S}) is given by the concavity of the volume function, which gives
	\begin{equation*}
		\Vol(L-xF)\geq\Vol(L)\left(\frac{x}{\tau_L(F)} \right)^n.
	\end{equation*}
	It follows that 
	\begin{equation*}
		S_L(F)\geq\frac{1}{n+1}\Vol(L)\tau_L(F).
	\end{equation*}
	The second inequality is proved in \cite[Proposition 2.1]{Fuj19} (In \cite{Fuj19}, $L=-K_X$, but this condition is not used in the proof). 
\end{proof}


For any $L$ in ${\rm Amp}(X)$, we define two numerical invariants:
\begin{equation*}
	s(L):=\sup\{s\in\mathbb{R}|-K_X-sL \text{ is ample}\},
\end{equation*}
and
\begin{equation*}
	\tilde{s}(L):=\inf\{s\in\mathbb{R}|K_X+sL \text{ is ample}\}.
\end{equation*}
By definitions of $s(L)$ and $\tilde{s}(L)$, we have $\mu(L)\geq s(L)$ and $\mu(L)\leq\tilde{s}(L)$.
Indeed, if one assume that $-K_X-\mu(L)L$ is ample, then
\begin{align*}
	0<(-K_X-\mu(L)L)\cdot L^{n-1}
	=&(-K_X\cdot L^{n-1}-\mu(L)L^n)               \\
	=&\left(\frac{-K_X\cdot L^{n-1}}{L^n}-\mu(L) \right)L^n.
\end{align*}
This leads to a contradiction. It follows that $\mu(L)\geq s(L)$.
Another one is similar.

We state the following useful lemma (see \cite[Corollary 3.11]{D-L20}), and use this lemma repeatedly later in the article,
\begin{lemma}
\label{integration by part lemma}
For any big divisor $L$ in $N^1(X)$ and any prime divisor $F$ over $X$, we have
\begin{equation}
	\int_0^{\tau_L(F)}n\langle(L-xF)^{n-1}\rangle {\cdot}Ldx
	=(n+1)\int_0^{\tau_L(F)}\Vol(L-xF)dx.
\end{equation}
\end{lemma}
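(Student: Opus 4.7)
The plan is to reduce the identity to an integration by parts using two facts we already have: $\Vol(\alpha)=\langle\alpha^n\rangle$ and $\langle\alpha^n\rangle=\langle\alpha^{n-1}\rangle\cdot\alpha$ for psef classes (Proposition \ref{cont of positive intersection product}), together with the differentiability formula $\frac{d}{dt}\big|_{t=0}\Vol(\alpha+t\gamma)=n\langle\alpha^{n-1}\rangle\cdot\gamma$ from Theorem \ref{differentiable of volume}. The key algebraic move is to decompose $L=(L-xF)+xF$ inside the pairing, so that for each $x\in(0,\tau_L(F))$ (where $L-xF$ is big and the formulae apply on $X_\pi$ for a common log resolution),
\begin{equation*}
n\langle(L-xF)^{n-1}\rangle\cdot L
=n\langle(L-xF)^{n-1}\rangle\cdot(L-xF)+x\cdot n\langle(L-xF)^{n-1}\rangle\cdot F.
\end{equation*}
The first term on the right equals $n\langle(L-xF)^n\rangle=n\Vol(L-xF)$ by Proposition \ref{cont of positive intersection product}(ii), and the second factor $n\langle(L-xF)^{n-1}\rangle\cdot F$ equals $-\frac{d}{dx}\Vol(L-xF)$ by Theorem \ref{differentiable of volume} applied to the one-parameter family $x\mapsto L-xF$.

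Next I would integrate this identity over $x\in[0,\tau_L(F)]$ and carry out a single integration by parts on the second summand:
\begin{equation*}
\int_0^{\tau_L(F)}x\,\frac{d}{dx}\Vol(L-xF)\,dx
=\bigl[x\Vol(L-xF)\bigr]_0^{\tau_L(F)}-\int_0^{\tau_L(F)}\Vol(L-xF)\,dx.
\end{equation*}
The boundary term vanishes: at $x=0$ trivially, and at $x=\tau_L(F)$ because $\Vol$ is continuous on $N^1(X)$ and vanishes on the boundary of the pseudo-effective cone by definition of the psef threshold. Adding the two contributions yields
\begin{equation*}
\int_0^{\tau_L(F)}n\langle(L-xF)^{n-1}\rangle\cdot L\,dx
=n\int_0^{\tau_L(F)}\Vol(L-xF)\,dx+\int_0^{\tau_L(F)}\Vol(L-xF)\,dx,
\end{equation*}
which is exactly the desired identity.

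The main technical point to watch is justifying the differentiability identity pointwise almost everywhere on $(0,\tau_L(F))$ and the integrability of $x\mapsto\langle(L-xF)^{n-1}\rangle\cdot F$. Here Theorem \ref{differentiable of volume} gives $C^1$-regularity on the big cone, which is exactly the open set $\{x:L-xF\text{ big}\}=(0,\tau_L(F))$ after passing to a smooth birational model where $F$ is a genuine Cartier divisor; the function $x\mapsto\Vol(L-xF)$ is continuous and monotone on $[0,\tau_L(F)]$, so its derivative is locally integrable and the integration by parts is valid up to the endpoint. Continuity of the positive intersection product on the big cone (Proposition \ref{cont of positive intersection product}(i)) ensures that the integrand on the left-hand side is continuous in $x$ on $(0,\tau_L(F))$, so the integral makes sense as an improper integral and the boundary behavior handled above suffices.
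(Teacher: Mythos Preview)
Your proof is correct and follows essentially the same route as the paper's (commented-out) argument: both reduce to the pointwise identity $n\langle(L-xF)^{n-1}\rangle\cdot L=n\Vol(L-xF)-x\frac{d}{dx}\Vol(L-xF)$ and then integrate by parts, with the boundary term vanishing by continuity of $\Vol$. The only cosmetic difference is that the paper obtains this pointwise identity via the homogeneity trick $\Vol(L+tL-xF)=(1+t)^n\Vol(L-\tfrac{x}{1+t}F)$ and differentiates in $t$, whereas your decomposition $L=(L-xF)+xF$ together with Proposition~\ref{cont of positive intersection product}(ii) gets there more directly; the subsequent integration by parts and endpoint analysis are identical.
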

This Lemma is due to \cite[Corollary 3.11]{D-L20}. Its proof is a standard computation by using integration by part, left to the reader.
By this lemma,
we can re-write $\beta$ as
\begin{flalign}
	\beta_L(F)
	=&A_X(F)\Vol(L)+(n\mu(L)-(n+1)s(L))S_L(F) \nonumber  \\
	&-\int_0^{+\infty}\Vol^{\prime}(L-xF)_{\cdot}(-s(L)L-K_X)dx,   \label{s rep of beta}
\end{flalign}
or
\begin{flalign}
	\beta_L(F)
	=&A_X(F)\Vol(L)+(n\mu(L)-(n+1)\tilde{s}(L))S_L(F)  \nonumber \\
	&+\int_0^{+\infty}\Vol^{\prime}(L-xF)_{\cdot}(\tilde{s}(L)L+K_X)dx.  \label{s-tilde rep of beta}
\end{flalign}
\begin{definition}
\label{def of valuative stability}
	For any $L\in{\rm Amp}(X)$, $(X,L)$ is called 
	\begin{enumerate}[(i)]
	\item \textit{valuatively semistable} if 
          \begin{equation*}
          	\beta_L(F)\geq0
          \end{equation*}
          for any prime divisor $F$ over $X$;
    \item \textit{valuatively stable} if
          \begin{equation*}
          	\beta_L(F)>0
          \end{equation*}
          for any non-trivial prime divisor $F$ over $X$, in which the non-trivial prime divisor $F$ means that the divisorial valuation associated to $F$ is non-trivial;
	\item \textit{uniformly valuatively stable} if there exists an $\varepsilon_L>0$ such that
	      \begin{equation}
	    \label{uniform valuative stable}
		      \beta_L(F)\geq\varepsilon_LS_L(F)
	    \end{equation}
	      for any prime divisor $F$ over $X$.
	\end{enumerate}
\end{definition}
\begin{remark}
\label{D-L definition}
    \begin{enumerate}[$(\rm i)$]
    	\item Note that in \cite{D-L20}, valuative stability means that $\beta_L$ satisfies the demanded inequality for all dreamy divisors (see \cite[Definition 2.6]{D-L20}).
    If $\beta_L$ is nonnegative for all prime divisors over $X$,  it is called \textit{strongly valuatively semistable} in \cite{D-L20}.
   \item 
   In \cite{D-L20} the authors use the norm $j_L(\cdot)$ to define uniformly valuative stability.
   By Lemma \ref{relation of norms}, $j_L$ and $S_L$ are equivalent. 

    \end{enumerate}
\end{remark}

In this paper, we are interested in the openness of uniformly valuative stability. Our main theorem is stated as follows,
\begin{theorem}
\label{UVs open}
The uniformly valuative stability locus 
\begin{equation*}
	{\rm UVs}:=\{[L]\in{\rm Amp}(X)|(X,L)\text{ is uniformly valuatively stable}\}
\end{equation*}
is an open subcone of the ample cone ${\rm Amp(X)}$.
\end{theorem}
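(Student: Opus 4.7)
Fix $L_0 \in \mathrm{UVs}$ with uniform constant $\varepsilon_0 > 0$, meaning $\beta_{L_0}(F) \geq \varepsilon_0\,S_{L_0}(F)$ for every prime divisor $F$ over $X$. The homogeneity of $\beta$ and $S_L$ in $L$ makes $\mathrm{UVs}$ automatically a cone, so it suffices to prove openness at $L_0$: I will find a neighborhood $U$ of $L_0$ in $\mathrm{Amp}(X)$ on which $\beta_L(F) \geq (\varepsilon_0/2)\,S_L(F)$ holds uniformly in $F$. Two obstacles have to be overcome: the log discrepancy $A_X(F)$ is unbounded across $F$, and the derivative integral $\int_0^{+\infty}\Vol'(L-xF)\cdot K_X\,dx$ has no definite sign and depends subtly on both $L$ and $F$.

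To handle these, pick $s > \tilde{s}(L_0)$ slightly larger. By continuity of $\tilde{s}$ on $\mathrm{Amp}(X)$, after shrinking $U$ the class $M_L := sL + K_X$ is ample for every $L\in U$ and satisfies $M_L \leq C_0 L$ in the nef cone, uniformly in $L\in U$. Applying Lemma \ref{integration by part lemma}, the definition of $\beta$ rewrites as
\begin{equation*}
\beta_L(F) \;=\; A_X(F)\,\Vol(L) + \bigl(n\mu(L)-(n+1)s\bigr)\,S_L(F) + \int_0^{+\infty}\Vol'(L-xF)\cdot M_L\,dx.
\end{equation*}
The integral is nonnegative, because $M_L$ is nef and pairs positively with $\langle(L-xF)^{n-1}\rangle$, and at most $C_0(n+1)\,S_L(F)$ by Lemma \ref{integration by part lemma} again. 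Hence, apart from the $A_X(F)\Vol(L)$ term, $\beta_L(F)$ is sandwiched by bounded multiples of $S_L(F)$, uniformly in $L\in U$ and in $F$.

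Now apply a dichotomy on $F$. Choose $C_1 > 0$ so large that $C_1\Vol(L) - |n\mu(L)-(n+1)s| - C_0(n+1) \geq \varepsilon_0$ for every $L\in U$. If $A_X(F) \geq C_1\,S_L(F)$, the displayed formula immediately yields $\beta_L(F) \geq \varepsilon_0\,S_L(F)$. Otherwise $A_X(F) \leq C_1\,S_L(F)$, and I compare $\beta_L(F)$ with $\beta_{L_0}(F)$ term by term: continuity of $\Vol$ on $\mathrm{Big}(X)$ (Theorem \ref{differentiable of volume}) together with $A_X(F)\leq C_1 S_L(F)$ bounds the log-discrepancy difference by $C_1|\Vol(L)-\Vol(L_0)|\,S_L(F)$, which is $o_U(1)\,S_L(F)$; continuity of $\mu$ handles the coefficient of $S_L(F)$; and uniform-in-$F$ comparability of $S_L(F)$ and $S_{L_0}(F)$ follows by writing $S_L(F) = \int_0^{\tau_L(F)}\Vol(L-xF)\,dx$ and using $C^1$-differentiability of $\Vol$ with the sandwich $(1-\varepsilon)L_0 \leq L \leq (1+\varepsilon)L_0$.

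The hardest step is bounding the integral difference
\begin{equation*}
\Delta \;:=\; \int_0^{+\infty}\bigl[\Vol'(L-xF)\cdot M_L - \Vol'(L_0-xF)\cdot M_{L_0}\bigr]\,dx
\end{equation*}
uniformly in $F$ by a vanishing multiple of $S_L(F)$. Writing $M_L - M_{L_0} = s(L-L_0)$ splits $\Delta$ into two pieces. The first, $\int \Vol'(L-xF)\cdot s(L-L_0)\,dx$, is handled by sandwiching $L-L_0$ between $\pm\eta L$ with $\eta\to 0$ as $U$ shrinks and invoking positivity of $\langle(L-xF)^{n-1}\rangle$ against nef classes together with Lemma \ref{integration by part lemma}, yielding an $O(\eta S_L(F))$ bound. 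The second piece $\int n[\langle(L-xF)^{n-1}\rangle - \langle(L_0-xF)^{n-1}\rangle]\cdot M_{L_0}\,dx$ is the genuinely delicate one: Proposition \ref{cont of positive intersection product} provides only pointwise-in-$x$ continuity of the positive intersection product, not the uniformity in both $x$ and $F$ that is required. I expect to resolve this by recognizing the integrand as $\partial_t\Vol(L-xF + tM_{L_0})|_{t=0}$, applying Fubini to exchange the $x$- and $t$-integrations, and reducing to differences of the form $\Vol(L-xF + tM_{L_0}) - \Vol(L_0-xF + tM_{L_0})$ that are controlled by the $C^1$-differentiability of $\Vol$ (Theorem \ref{differentiable of volume}) together with the uniform bound $M_{L_0}\leq C'L$; after integration in $x$ this gives $O(\|L-L_0\|\,S_L(F))$. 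Shrinking $U$ so that all accumulated error terms are at most $\varepsilon_0/2$ then yields $\beta_L(F)\geq (\varepsilon_0/2)\,S_L(F)$ for every $F$, completing the openness argument.
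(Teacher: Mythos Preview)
Your overall architecture is sound and close to the paper's: you correctly identify that the log discrepancy term forces a dichotomy on $F$, you correctly rewrite $\beta_L$ using an ample class $M_L = sL+K_X$, and your treatment of the first piece of $\Delta$ (pairing with $s(L-L_0)$) is fine. The genuine gap is your handling of the second piece
\[
\int_0^{+\infty} n\bigl[\langle(L-xF)^{n-1}\rangle - \langle(L_0-xF)^{n-1}\rangle\bigr]\cdot M_{L_0}\,dx.
\]
Your proposed ``Fubini'' argument does not make sense as written: there is no $t$-integral, only a derivative at $t=0$. If you interpret it as exchanging $\lim_{t\to 0}$ with $\int dx$ and then trying to control the derivative of $t\mapsto \int_0^\infty[\Vol(L-xF+tM_{L_0})-\Vol(L_0-xF+tM_{L_0})]\,dx$, you are asking for a modulus of continuity on the \emph{derivative} of the volume, i.e.\ essentially $C^{1,\alpha}$ or $C^2$ information, and Theorem~\ref{differentiable of volume} gives only $C^1$. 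So this step fails.

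The paper closes this gap by a homogeneity/rescaling trick that you already have the ingredients for but do not use. From the sandwich $(1-a)L_0 \leq L \leq (1+a)L_0$ (your own observation), monotonicity and degree-$(n-1)$ homogeneity of the positive intersection product give
\[
(1-a)^{n-1}\Bigl\langle\bigl(L_0-\tfrac{x}{1-a}F\bigr)^{n-1}\Bigr\rangle
\;\leq\; \langle(L-xF)^{n-1}\rangle \;\leq\;
(1+a)^{n-1}\Bigl\langle\bigl(L_0-\tfrac{x}{1+a}F\bigr)^{n-1}\Bigr\rangle.
\]
Since $M_{L_0}$ is nef, one may intersect with $M_{L_0}$, integrate in $x$, and change variables $x\mapsto(1\pm a)x$ to obtain
\[
(1-a)^n\int_0^{+\infty} n\langle(L_0-xF)^{n-1}\rangle\cdot M_{L_0}\,dx
\;\leq\; \int_0^{+\infty} n\langle(L-xF)^{n-1}\rangle\cdot M_{L_0}\,dx
\;\leq\; (1+a)^n\int_0^{+\infty} n\langle(L_0-xF)^{n-1}\rangle\cdot M_{L_0}\,dx.
\]
Hence the second piece of $\Delta$ is bounded in absolute value by $\bigl((1+a)^n-(1-a)^n\bigr)\int n\langle(L_0-xF)^{n-1}\rangle\cdot M_{L_0}\,dx$, and since $M_{L_0}\leq C_0 L_0$ this last integral is at most $C_0(n+1)S_{L_0}(F)$ by Lemma~\ref{integration by part lemma}. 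That gives the uniform-in-$F$ bound $O(a)\,S_{L_0}(F)$ you need. The key point is that the change of variables absorbs the $F$-dependence entirely into $S_{L_0}(F)$, which is exactly the uniformity that pointwise continuity (Proposition~\ref{cont of positive intersection product}) cannot provide and that your Fubini sketch does not supply.
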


\section{Proof of openness of valuative stability}
\label{Proof of openness of valuative stability}
In this section, we give a proof of Theorem \ref{UVs open}.

We first give a rough idea of setup:
Fix an ample $\bb R$-divisor $L$, which is uniformly valuatively stable, and choose a constant $\varepsilon_L>0$ such that
\begin{equation*}
	\beta_L(F)\geq\varepsilon_LS_L(F)
\end{equation*}
for any prime divisor $F$ over $X$.
Our goal is to show that there exists a small open neighbourhood $U$ of $L$ in ${\rm Amp}(X)$ such that, for any $L'$ in $U$ there is a constant $\delta_{L'}>0$ satisfying
\begin{equation*}
	\beta_{L'}(F)\geq\delta_{L'} S_{L'}(F)
\end{equation*}
for all prime divisor $F$ over $X$.

To define such an open neighbourhood of $L$, we fix any norm $\|\cdot\|$ on $N^1(X)$ and define an open subset
\begin{equation*}
	U_{\varepsilon}:=\{
	L'\in{\rm Amp}(X)\ |\ \|L'-L\|<\varepsilon
	 \}. 
\end{equation*}
If necessary, we shrink this neighbourhood, i.e. shrink $\varepsilon$.

It suffices to prove following these two estimates

\begin{equation}
\label{diff of beta}
	\beta_{L'}(F)-\beta_{L}(F) \geq -f(\varepsilon)S_{L'}(F)
\end{equation}
and
\begin{equation}
	S_L(F)\geq s^{-}(\varepsilon)S_{L'}(F)  
\end{equation}
for any prime divisor $F$ over $X$, 
where 
$f:{\bb R}^+\rightarrow{\bb R}^+$ and $s^{-}:{\bb R}^+\rightarrow{\bb R}^+$ are continuous functions with  $f(\varepsilon)\rightarrow0$ and $s^{-}(\varepsilon)\rightarrow1$ as $\varepsilon \rightarrow0$.
Indeed,
\begin{flalign}
	\beta_{L'}(F)
	=&\beta_L(F)+\beta_{L'}(F)-\beta_L(F)    \nonumber            \\
	\geq&\varepsilon_LS_L(F)-f(\varepsilon)S_{L'}(F)    \nonumber              \\
	\geq&\left(\varepsilon_L s^-(\varepsilon)-f(\varepsilon)\right)S_{L'}(F).        \label{idea of uvs}
\end{flalign}

\begin{lemma}
\label{S estimate lemma}
	For any $L$ in ${\rm Amp}(X)$, there exists a small constant $\varepsilon>0$, such that for any $L'$ in $U_{\varepsilon}$ satisfying the following inequality 
	\begin{equation*}
		s^-(\varepsilon)S_{L'}(F)
		\leq S_L(F)\leq
		s^+(\varepsilon)S_{L'}(F)
	\end{equation*}
	for any $F$ in ${\rm PDiv}_{/X}$, where $s^-:{\bb R}^+\rightarrow{\bb R}^+$ and $s^{+}:{\bb R}^+\rightarrow{\bb R}^+$ are continuous functions with  $s^-(\varepsilon)\rightarrow1$ and $s^{+}(\varepsilon)\rightarrow1$ as $\varepsilon \rightarrow0$.
	Moreover, $s^-(\varepsilon)<1$ and $s^+(\varepsilon)>1$.
\end{lemma}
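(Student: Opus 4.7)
The plan is to compare $\Vol(L'-xF)$ with $\Vol(L-yF)$ for a rescaled parameter $y$, using that $L$ is ample and that the ample cone is open.

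First I would choose the scaling constants. Since $L$ lies in the open cone ${\rm Amp}(X)$ and the norm $\|\cdot\|$ on $N^1(X)$ is fixed, there exist continuous functions $c^{-}(\varepsilon), c^{+}(\varepsilon)$ with $c^{-}(\varepsilon)<1<c^{+}(\varepsilon)$ and $c^{\pm}(\varepsilon)\to 1$ as $\varepsilon\to 0$, such that for every $L'\in U_{\varepsilon}$ both classes
\begin{equation*}
L'-c^{-}(\varepsilon)L \qquad \text{and} \qquad c^{+}(\varepsilon)L-L'
\end{equation*}
are ample, hence in particular pseudo-effective. (If $\|L'-L\|<\varepsilon$ one can set $c^{\pm}=1\pm C\varepsilon$ for a constant $C$ depending only on $L$ and the norm, shrinking $\varepsilon$ so that $c^{-}>0$.)

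Next I would use the monotonicity of volume recalled after \eqref{def of volume}: if $A-B$ is pseudo-effective and $B$ is big, then $\Vol(A)\geq \Vol(B)$. Applying this with the two comparisons above, for every prime divisor $F$ over $X$ and every $x\geq 0$,
\begin{equation*}
\Vol(c^{-}L-xF) \,\leq\, \Vol(L'-xF) \,\leq\, \Vol(c^{+}L-xF).
\end{equation*}
Combining with the homogeneity of the volume, $\Vol(cL-xF)=c^{n}\Vol(L-\tfrac{x}{c}F)$, this rewrites as
\begin{equation*}
(c^{-})^{n}\Vol\!\Bigl(L-\tfrac{x}{c^{-}}F\Bigr) \,\leq\, \Vol(L'-xF) \,\leq\, (c^{+})^{n}\Vol\!\Bigl(L-\tfrac{x}{c^{+}}F\Bigr).
\end{equation*}

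Now I would integrate in $x$ over $[0,+\infty)$ and perform the substitution $y=x/c^{\pm}$ in each bound. The substitution supplies an extra factor of $c^{\pm}$, so after integration one obtains
\begin{equation*}
(c^{-}(\varepsilon))^{n+1}\,S_{L}(F) \,\leq\, S_{L'}(F) \,\leq\, (c^{+}(\varepsilon))^{n+1}\,S_{L}(F),
\end{equation*}
uniformly in $F\in{\rm PDiv}_{/X}$. Rearranging gives the desired two-sided bound with
\begin{equation*}
s^{-}(\varepsilon):=(c^{+}(\varepsilon))^{-(n+1)}<1, \qquad s^{+}(\varepsilon):=(c^{-}(\varepsilon))^{-(n+1)}>1,
\end{equation*}
both continuous in $\varepsilon$ and tending to $1$ as $\varepsilon\to 0$.

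The only genuine point of care is the first step: producing $c^{\pm}(\varepsilon)$ uniformly in $L'\in U_{\varepsilon}$. This is just the openness of the ample cone and the equivalence of norms on the finite dimensional space $N^{1}(X)$, but it is the step that makes the whole comparison uniform in $F$. After that, everything reduces to the monotonicity and homogeneity of $\Vol$ recorded in Section~\ref{Preliminaries}, so no property of the valuation attached to $F$ is used beyond the formal definition of $S_{L}(F)$.
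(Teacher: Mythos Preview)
Your proof is correct and follows essentially the same strategy as the paper: sandwich $L'$ between $c^{-}(\varepsilon)L$ and $c^{+}(\varepsilon)L$ in the pseudo-effective order, then use monotonicity and degree-$n$ homogeneity of the volume to compare $\Vol(L'-xF)$ with $\Vol(L-yF)$ after a linear reparametrisation in $x$, and integrate. The paper obtains the sandwich by rewriting $L'=\tfrac{1}{1+s}(L+sL_s)$ with $s=\varepsilon^{1/4}$ and checking that $(1\pm s)L-L_s$ is big, which yields the explicit constants $s^{\pm}(\varepsilon)=\bigl(1\pm\varepsilon^{1/2}/(1+\varepsilon^{1/4})\bigr)^{n+1}$; your argument instead invokes directly the openness of the ample cone (what the paper later records as Lemma~\ref{a and varepsilon}) to get $c^{\pm}(\varepsilon)=1\pm C\varepsilon$. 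Your route is a bit more streamlined and yields sharper constants, while the paper's decomposition is tailored so that the same auxiliary class $L_s$ can be reused in the proof of Theorem~\ref{lower bdd theorem}; but for the lemma itself the two arguments are interchangeable.
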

\begin{proof}
	For any $L'$ in $U_{\varepsilon}$,  we write it as $L'=L+  \varepsilon H$
for some $\bb R$-divisor $H$ in $N^1(X)$.
For any $s>0$, we can write
	\begin{equation*}
		L+\varepsilon H
		=\frac{1}{1+s}\left(L+s(L+\frac{(1+s)\varepsilon}{s}H)\right),
	\end{equation*}
	and set
	\begin{equation*}
		L_s:=L+\frac{(1+s)\varepsilon}{s}H.
	\end{equation*}
	Then by choosing $s$ small enough (determined later), which depends on $\varepsilon$, we can assume that both $(1+s)L-L_s$ and $L_s-(1-s)L$ are big.
	Indeed,
	\begin{equation*}
		(1+s)L-L_s
		=s\left(L-\frac{(1+s)\varepsilon}{s^2}H \right),
	\end{equation*}
	for instance, take $s=\varepsilon^{1/4}$, then $(1+s)L-L_s$ is big when $\varepsilon$ is small.
	
	Thus we have
\begin{flalign}
	S_{L'}(F)
	=&\int_0^{+\infty}\Vol(L'-xF)dx        \nonumber                    \\
	=&(1+s)^{-n}\int_0^{+\infty}
	\Vol(L+sL_s-(1+s)xF)dx   \nonumber             \\
	\geq&(1+s)^{-n}\int_0^{+\infty}
	\Vol(L+(s-s^2)L-(1+s)xF)dx       \nonumber       \\
	=&\left(\frac{1+s-s^2}{1+s}\right)^n
	\int_0^{+\infty}\Vol(L-\frac{1+s}{1+s-s^2}xF)dx    \nonumber      \\
	=&\left(\frac{1+s-s^2}{1+s}\right)^{n+1}  S_L(F).  \label{estimate of S-invariant}
\end{flalign}
On the other hand, similarly, we have
\begin{equation}
\label{estimate of S another direction}
	S_{L'}(F)\leq
	\left(\frac{1+s+s^2}{1+s}\right)^{n+1}S_L(F).
\end{equation}
By taking 
\begin{equation*}
	s^-(\varepsilon)
	=\left(1-\frac{\varepsilon^{1/2}}{1+\varepsilon^{1/4}}\right)^{n+1}
	\text{ and } 
	s^+(\varepsilon)
	=\left(1+\frac{\varepsilon^{1/2}}{1+\varepsilon^{1/4}}\right)^{n+1},
\end{equation*}
we finish the proof of Lemma \ref{S estimate lemma}.

\end{proof}

In what follows, we aim to establish the inequality (\ref{diff of beta}).
In fact, we do not need to show the inequality (\ref{diff of beta}) for any prime divisor $F$ over $X$.
By the definition of uniformly valuative stability, 
we introduce a subset of prime divisors over $X$ as in the next definition, on which it is clearly sufficient to test uniformly valuative stability.
\begin{definition}
	For any $L$ in ${\rm Amp}(X)$, let
	\begin{equation*}
		{\cal D}^{\rm ud}_L
		:=\{
		F\in {\rm PDiv}_{/X}: \beta_L(F)\leq C_LS_L(F)
		\},
	\end{equation*}
	for some constant $C_L>0$ (determined later).
\end{definition}  
It follows that we only need to prove the inequality (\ref{diff of beta}) for any $F$ in ${\cal D}^{\rm ud}_{L'}$. 
Since when $F\notin{\cal D}^{\rm ud}_{L'}$, it automatically satisfies the condition of uniformly valuative stability.

Then for any $F$ in ${\cal D}^{\rm ud}_{L'}$, we have 
\begin{flalign}
	&A_X(F)\Vol(L')+\int_0^{+\infty}n\langle(L'-xF)^{n-1}\rangle\cdot(K_X+\tilde{s}(L')L')dx  \nonumber \\
	\leq&\left(C_{L'}-n\mu(L')+(n+1)\tilde{s}(L')\right)S_{L'}(F),   
	\label{upper bound of A+R}
\end{flalign}
where we have used the Lemma \ref{integration by part lemma}. Now we choose $C_{L'}>0$ such that $C_{L'}-n\mu(L')+(n+1)\tilde{s}(L')\geq0$.
\begin{theorem}
\label{lower bdd theorem}
	Given a divisor $L$ in ${\rm Amp}(X)$, 
	there exists a constant $\varepsilon_0>0$ and a continuous function $f:{\bb R}^+\rightarrow{\bb R}^+$ with $\lim_{\varepsilon\rightarrow0}f(\varepsilon)=0$, such that for any $0<\varepsilon\leq\varepsilon_0$ and any $L'\in U_\varepsilon$, the inequality
	\begin{equation*}
	\beta_{L'}(F)-\beta_L(F)\geq
		- f(\varepsilon)S_{L'}(F)
	\end{equation*}
	is satisfied for all $F\in{\cal D}^{\rm ud}_{L'}$. 
Moreover, the choice of $f$ only depends on $X$ and $L$.
\end{theorem}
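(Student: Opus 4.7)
The strategy is to decompose $\beta_{L'}(F) - \beta_L(F)$ via the $\tilde{s}$-representation (\ref{s-tilde rep of beta}) and exploit the hypothesis $F\in\mathcal{D}_{L'}^{\mathrm{ud}}$ to tie the unbounded log-discrepancy to the derivative integral. Introduce the nonnegative block $Z(L) := A_X(F)\Vol(L) + \int_0^{+\infty}\Vol^{\prime}(L-xF)_{\cdot}(K_X+\tilde{s}(L)L)\,dx$, which is $\ge 0$ because $K_X+\tilde{s}(L)L$ is nef (by definition of $\tilde{s}$) while $\langle(L-xF)^{n-1}\rangle$ is psef. Then $\beta_L(F) = Z(L) + (n\mu(L)-(n+1)\tilde{s}(L))S_L(F)$, and I would bound $\beta_{L'}(F)-\beta_L(F)$ term by term, aiming to absorb everything into $O(\varepsilon^{\kappa})S_{L'}(F)$ for some $\kappa>0$.

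The easy pieces first. The coefficient-difference part is controlled by continuity of $\mu$ and $\tilde{s}$ on $\mathrm{Amp}(X)$ together with Lemma \ref{S estimate lemma}, giving $O(\varepsilon)S_{L'}(F)$. For the $Z$-difference, the key input of the hypothesis $F\in\mathcal{D}_{L'}^{\mathrm{ud}}$ is obtained by dropping the nonnegative integral in (\ref{upper bound of A+R}): this forces $A_X(F)\le C(L',C_{L'})\,S_{L'}(F)/\Vol(L')$, which together with continuity of $\Vol$ yields $|A_X(F)(\Vol(L')-\Vol(L))|\le O(\varepsilon)S_{L'}(F)$. The integral part of $Z(L')-Z(L)$ splits as $\int\Vol^{\prime}(L'-xF)_{\cdot}(\tilde{s}(L')L'-\tilde{s}(L)L)\,dx + \int[\Vol^{\prime}(L'-xF)-\Vol^{\prime}(L-xF)]_{\cdot} M\,dx$ with the fixed nef class $M:=K_X+\tilde{s}(L)L$. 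In the first term, $Y:=\tilde{s}(L')L'-\tilde{s}(L)L$ has norm $O(\varepsilon)$; since $L'$ is ample, $cL'\pm Y$ is nef with $c=O(\varepsilon)$, so Lemma \ref{integration by part lemma} bounds the term by $c(n+1)S_{L'}(F)=O(\varepsilon)S_{L'}(F)$.

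The main obstacle is the remaining term $\int[\Vol^{\prime}(L'-xF)-\Vol^{\prime}(L-xF)]_{\cdot} M\,dx$. Pointwise in $x$, Theorem \ref{differentiable of volume} and Proposition \ref{cont of positive intersection product} give continuity of the integrand in $L$, but $\tau_{L'}(F)$ can be arbitrarily large as $F$ varies, so pointwise continuity does not yield uniformity in $F$. To get around this I would repeat the rescaling trick used in Lemma \ref{S estimate lemma}: pick $s=\varepsilon^{1/4}$, write $L'=(1+s)^{-1}(L+sL_s)$ with $L_s=L+\frac{(1+s)\varepsilon}{s}H$, and use that both $(1+s)L-L_s$ and $L_s-(1-s)L$ are big for $\varepsilon$ small. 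By homogeneity of the positive intersection product and the monotonicity $\langle A^{n-1}\rangle\cdot M\le\langle B^{n-1}\rangle\cdot M$ whenever $A\le B$ (both big, $M$ nef)---obtained by iterated super-additivity in each slot---the integrand $\langle(L'-xF)^{n-1}\rangle\cdot M$ is sandwiched between constant multiples of $\langle(L-yF)^{n-1}\rangle\cdot M$ at rescaled $y$. After change of variables this yields $\int\Vol^{\prime}(L'-xF)_{\cdot} M\,dx=(1+O(s))\int\Vol^{\prime}(L-yF)_{\cdot} M\,dy$. Since $M\le(\tilde{s}(L)-s(L))L$ in the nef order (from $-K_X-s(L)L$ nef), the right-hand integral is controlled by $(n+1)(\tilde{s}(L)-s(L))S_L(F)$ via Lemma \ref{integration by part lemma}, so the difference is $O(s)S_L(F)=O(\varepsilon^{1/4})S_L(F)$; converting to $S_{L'}(F)$ via Lemma \ref{S estimate lemma} closes the estimate with $f(\varepsilon)=O(\varepsilon^{1/4})$, depending only on $X$ and $L$. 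The essential point is that this rescaling replaces a delicate derivative comparison---where uniformity in $F$ fails---by a robust volume/positive intersection comparison that is automatically uniform in $F$ through the monotonicity and homogeneity of $\langle\,\cdot^{n-1}\rangle$.
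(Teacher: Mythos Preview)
Your proposal is correct and rests on the same three pillars as the paper's proof: (i) the sandwich $\langle(L'-xF)^{n-1}\rangle\cdot M$ between rescaled copies of $\langle(L-yF)^{n-1}\rangle\cdot M$ via monotonicity and homogeneity of the positive intersection product, (ii) Lemma~\ref{integration by part lemma} to convert the resulting integrals into $S$-invariants, and (iii) the hypothesis $F\in\mathcal{D}^{\rm ud}_{L'}$ through (\ref{upper bound of A+R}) to tame the log discrepancy. The organization differs in two respects. First, you work throughout with the \emph{fixed} nef class $M=K_X+\tilde{s}(L)L$, whereas the paper switches between $K_X+\tilde{s}(L')L'$ and $-K_X-s(L')L'$ according to the sign of $\mu(L')$; your choice removes the need for that case split. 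Second, you decouple the $A_X(F)$-term from the integral by first extracting the scalar bound $A_X(F)\Vol(L')\le\bigl(C_{L'}-n\mu(L')+(n+1)\tilde{s}(L')\bigr)S_{L'}(F)$ from (\ref{upper bound of A+R}) and then bounding $A_X(F)(\Vol(L')-\Vol(L))$ directly; the paper instead keeps $A_X(F)\Vol(L')$ and the integral $\int n\langle(L'-xF)^{n-1}\rangle\cdot(K_X+\tilde{s}(L')L')\,dx$ bundled together and applies (\ref{upper bound of A+R}) to their sum. Both routes give $f(\varepsilon)\to 0$ with dependence only on $X$ and $L$; your packaging is a mild streamlining, while the paper's bundling makes the role of the combined nonnegative block $A_X(F)\Vol(L')+\int(\cdots)$ more transparent. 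One small note: your rescaling via Lemma~\ref{S estimate lemma} yields multiplicative errors of order $s^2=\varepsilon^{1/2}$ rather than $s=\varepsilon^{1/4}$, so your final $f(\varepsilon)$ is actually $O(\varepsilon^{1/2})$; the paper's use of Lemma~\ref{a and varepsilon} with $a=\varepsilon/t_0$ gives $O(\varepsilon)$ at that step, but either suffices.
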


We first show the estimate of the second term of $\beta$-invariant, i.e. $\mu S$.
\begin{lemma}
\label{estimate of slope with S} 
For any $L$ in ${\rm Amp}(X)$, 
there exists a constant $\varepsilon_0>0$ and a continuous function $h:{\bb R}^+\rightarrow{\bb R}^+$ with $\lim_{\varepsilon\rightarrow0}h(\varepsilon)=0$, such that for any $0<\varepsilon\leq\varepsilon_0$ and any $L'\in U_\varepsilon$, the inequality
	\begin{equation}
	\label{estimate of slope}
	n\mu(L')S_{L'}(F)-n\mu(L)S_L(F)
	\geq-h(\varepsilon)nS_{L'}(F)  
	\end{equation}
	is satisfied for all $F\in{\cal D}^{\rm ud}_{L'}$. 
Moreover, the choice of $h$ only depends on $X$ and $L$.
\end{lemma}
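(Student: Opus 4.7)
The plan is to split the difference algebraically and bound each piece separately, using continuity of $\mu$ on ${\rm Amp}(X)$ for one piece and the previously established Lemma \ref{S estimate lemma} for the other. Specifically, I write
\begin{equation*}
n\mu(L')S_{L'}(F) - n\mu(L)S_L(F)
= n\bigl[\mu(L')-\mu(L)\bigr]S_{L'}(F) + n\mu(L)\bigl[S_{L'}(F) - S_L(F)\bigr].
\end{equation*}
Each term will be bounded below by $-$(small number)$\cdot n\,S_{L'}(F)$, with the small numbers depending only on $\varepsilon$, $X$ and $L$.

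For the first piece, since $\mu(L)=-K_X\cdot L^{n-1}/L^n$ is a rational function of intersection numbers, it is continuous on the ample cone. Hence there exists a continuous $h_1:{\bb R}^+\rightarrow{\bb R}^+$ with $h_1(\varepsilon)\rightarrow 0$ as $\varepsilon\rightarrow 0$, depending only on $X$ and $L$, such that $|\mu(L')-\mu(L)|\leq h_1(\varepsilon)$ for every $L'\in U_\varepsilon$. This immediately yields
\begin{equation*}
n\bigl[\mu(L')-\mu(L)\bigr]S_{L'}(F) \geq -n h_1(\varepsilon) S_{L'}(F).
\end{equation*}

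For the second piece, I invoke Lemma \ref{S estimate lemma} directly: after possibly shrinking $\varepsilon$, we have $s^-(\varepsilon)S_{L'}(F)\leq S_L(F)\leq s^+(\varepsilon) S_{L'}(F)$ for all prime divisors $F$ over $X$, where $s^\pm(\varepsilon)\rightarrow 1$. Consequently
\begin{equation*}
\bigl| S_{L'}(F) - S_L(F)\bigr| \leq \max\bigl(s^+(\varepsilon)-1,\; 1-s^-(\varepsilon)\bigr)\, S_{L'}(F) =: g(\varepsilon)\, S_{L'}(F),
\end{equation*}
with $g(\varepsilon)\rightarrow 0$ as $\varepsilon\rightarrow 0$. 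Hence $n\mu(L)[S_{L'}(F)-S_L(F)] \geq -n|\mu(L)| g(\varepsilon) S_{L'}(F)$. Combining the two bounds gives the desired inequality with
\begin{equation*}
h(\varepsilon) := h_1(\varepsilon) + |\mu(L)|\, g(\varepsilon),
\end{equation*}
which clearly depends only on $X$ and $L$ and tends to $0$ with $\varepsilon$.

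There is essentially no obstacle in this lemma: the slope $\mu$ depends continuously on the polarization, and the $S$-invariants are comparable for nearby polarizations by Lemma \ref{S estimate lemma}, uniformly in $F$. Note in particular that the restriction $F\in\mathcal D^{\rm ud}_{L'}$ is not actually needed here; the estimate holds for every prime divisor $F$ over $X$. The genuine difficulty of Theorem \ref{lower bdd theorem}, as flagged in the introduction, lies in the derivative term $\int \Vol'(L-xF)\cdot K_X\,dx$, which must be handled in a separate lemma; the $\mu S$ part addressed here is the routine portion of the decomposition (\ref{s-tilde rep of beta}).
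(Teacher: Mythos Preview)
Your argument is correct and in fact cleaner than the paper's. The paper proves the same inequality by first writing $L'=\frac{1}{1+s}(L+sL_s)$ and bounding $\Vol(L')$ explicitly, then splitting into the two cases $\mu(L')\geq 0$ and $\mu(L')\leq 0$; in each case it expands $\mu(L')-\mu(L)$ via the intersection-number identity $(-K_X)\cdot\bigl((L')^{n-1}-L^{n-1}\bigr)=\varepsilon(-K_X)\cdot H\cdot\bigl((L')^{n-2}+\cdots+L^{n-2}\bigr)$ and collects the error as an explicit polynomial $g(\varepsilon)$ plus a term involving $s_\pm$. Your decomposition $n[\mu(L')-\mu(L)]S_{L'}+n\mu(L)[S_{L'}-S_L]$ bypasses the sign case-split entirely: the first bracket is controlled by continuity of $\mu$ on ${\rm Amp}(X)$ and the second by Lemma~\ref{S estimate lemma}, uniformly in $F$. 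The paper's route yields slightly more explicit error functions, but this explicitness is never used downstream; your version gives everything needed for Theorem~\ref{lower bdd theorem}. Your closing remark is also accurate: neither proof uses the hypothesis $F\in\mathcal D^{\rm ud}_{L'}$ for this particular lemma---that restriction only enters when bounding the $K_X$-derivative term in the proof of Theorem~\ref{lower bdd theorem}.

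One small point worth tightening: to guarantee that $h_1$ can be chosen \emph{continuous} (not merely tending to $0$), note that $\mu(L')$ is a rational function of the coordinates of $L'$ with nonvanishing denominator on $U_{\varepsilon_0}$, so $|\mu(L')-\mu(L)|$ is bounded by an explicit continuous function of $\varepsilon$; alternatively, any monotone modulus of continuity can be majorized by a continuous one.
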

\begin{proof}
For simplicity, we denote 
$$
s_-(n):=\left(1-\frac{s^2}{1+s} \right)^n<1 \text{ and }\ 
s_+(n):=\left(1+\frac{s^2}{1+s} \right)^n>1.
$$
For any $L'$ in $U_{\varepsilon}$, we can write
	\begin{equation*}
		L'=L+\varepsilon H
		=\frac{1}{1+s}\left(L+sL_s\right)
	\end{equation*}
	in the same way as the proof of Lemma \ref{S estimate lemma}, for some $\bb R$-divisor $H$ and  $L_s$ in $N^1(X)$. 
Thus we have
\begin{align*}
	\Vol(L')
	=&(1+s)^{-n}\Vol(L+sL_s)               \\
	\geq&(1+s)^{-n}\Vol(L+(s-s^2)L) \\         
	=&s_{-}(n)\Vol(L).
\end{align*}
Similarly, one obtains
\begin{align*}
	\Vol(L')
	\leq&s_{+}(n)\Vol(L).
\end{align*}
The proof falls naturally into two cases.

\begin{itemize}
	\item[(1)] When $\mu(L')\geq0$, then we compute
\begin{flalign}
	&n\mu(L')S_{L'}(F)-n\mu(L)S_L(F)    \nonumber     \\
	\geq&nS_L(F)(s^-(\varepsilon)\mu(L')-\mu(L))      \nonumber   \\
	=&nS_L(F)\left((s^-(\varepsilon)-1)\mu(L')+\frac{-K_X\cdot(L')^{n-1}}{\Vol(L')}-\frac{-K_X\cdot L^{n-1}}{\Vol(L)}\right)   \nonumber   \\
	\geq&nS_L(F)\left((s^-(\varepsilon)-1)\mu(L')+\frac{-K_X\cdot(L')^{n-1}}{s_+(n)\Vol(L)}-\frac{-K_X\cdot L^{n-1}}{\Vol(L)} \right)   \nonumber  \\
	\geq&nS_L(F)\left((s^-(\varepsilon)-1)\mu(L')+(s_+(n)^{-1}-1)\frac{-K_X\cdot(L')^{n-1}}{\Vol(L)}\right.                    \nonumber          \\
	&\left.+\frac{1}{\Vol(L)}(-K_X\cdot(L')^{n-1}-(-K_X)\cdot L^{n-1}) \right)         \nonumber         \\
	\geq&nS_L(F)\left((s^-(\varepsilon)-1)\mu(L')+(\frac{1}{s_+(n)}-1)s_+(n)\mu(L')\right.                              \nonumber \\
	&\left.+\frac{1}{\Vol(L)}((-K_X)\cdot\varepsilon H ((L')^{n-2}+(L')^{n-3}\cdot L+\cdots + L^{n-2}) \right)             \nonumber     \\
	\geq&nS_L(F)\Big((s^-(\varepsilon)-s_+(n))\mu(L') \nonumber \\                             
	&\left.+\varepsilon\frac{1}{\Vol(L)}((-K_X)\cdot H ((L')^{n-2}+(L')^{n-3}\cdot L+\cdots + L^{n-2}) \right).          \nonumber      
\end{flalign}
In general, we do not know the sign of 
\begin{equation*}
	\frac{1}{\Vol(L)}((-K_X)\cdot H ((L')^{n-2}+(L')^{n-3}\cdot L+\cdots + L^{n-2}).
\end{equation*}
But we can cancel it directly if it is nonnegative.
Therefore, without loss of generality, we may assume that it is negative. Then
\begin{flalign}
	n\mu(L')S_{L'}(F)-n\mu(L)S_L(F)
	\geq&(-h_1(\varepsilon)-g(\varepsilon))nS_L(F)           \nonumber \\
	\geq&(-h_1(\varepsilon)-g(\varepsilon))s^-(\varepsilon)^{-1}nS_{L'}(F),     
\end{flalign}
where 
\begin{equation}
	g(\varepsilon)
	=-\varepsilon \frac{1}{\Vol(L)}((-K_X)\cdot H ((L')^{n-2}+(L')^{n-3}\cdot L+\cdots + L^{n-2})
\end{equation}
which is a polynomial in $\varepsilon$ with degree $n-1$ and $g(0)=0$, whose coefficients depend on $-K_X$, $L$, $H$, and the leading term is $\Vol(L)^{-1}(-K_X)\cdot H^{n-1}$, and $h_1:{\bb R}^+\rightarrow{\bb R}^+$ is a continuous function with  $h_1(\varepsilon)\rightarrow0$ as $\varepsilon\rightarrow0$, which depends on $\mu(L')$.

In fact, $h_1$ is independent of the choice of $L'$.
Since $g$ is a polynomial with degree $n-1$ and $L'$ can be represented by a basis of Nef cone (see the following Lemma \ref{a and varepsilon}), then the choice of $g$ only depends on $X$ and $L$.

\item[(2)] When $\mu(L')\leq0$, the computation is similar. We omit it.
\end{itemize}
This completes the proof of Lemma \ref{estimate of slope with S} by taking $h=(h_1+g)s^-(\varepsilon)^{-1}$.
\end{proof}

\begin{lemma}
\label{a and varepsilon}
	There exists a constant $a>0$, which depends on $\varepsilon$ and $L$, such that 
	\begin{equation*}
		(1-a)L\leq L'\leq(1+a)L
	\end{equation*}
	for any $L'$ in $U_{\varepsilon}$.
	Moreover, such $a$ can be chosen as small as we wish by choosing $\varepsilon$ small.
\end{lemma}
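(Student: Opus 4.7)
The statement is essentially that the ample cone is open and every point in it is bounded from below and above by scalar multiples of itself on all sufficiently close perturbations. The plan is to exploit the finite-dimensionality of $N^1(X)$ together with the fact that the ample cone has nonempty interior, and then argue via a convenient basis.

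Set $\rho:=\dim_{\bb R}N^1(X)$. Since $L\in\mathrm{Amp}(X)$ lies in the interior of $\mathrm{Nef}(X)$, I would first choose a basis $(A_1,\ldots,A_\rho)$ of $N^1(X)$ consisting of ample classes such that $L$ has strictly positive coordinates in this basis. Concretely, picking any set of $\rho$ ample classes close to $L$ that span $N^1(X)$ and rescaling, one writes $L=\sum_{i=1}^{\rho}t_iA_i$ with all $t_i>0$; this is possible because the ample cone is open and spans $N^1(X)$. Set $t_0:=\min_i t_i>0$.

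Since all norms on the finite-dimensional space $N^1(X)$ are equivalent, I may assume (up to a harmless constant absorbed into the final $a$) that the fixed norm $\|\cdot\|$ is the $\ell^1$-norm with respect to this basis, namely $\|\sum s_iA_i\|=\sum|s_i|$. For any $L'\in U_\varepsilon$, write $L'=L+H$ with $\|H\|<\varepsilon$, and expand $H=\sum r_iA_i$ so that $\sum|r_i|<\varepsilon$. In particular $|r_i|<\varepsilon$ for every $i$, whence
\begin{equation*}
  (t_i-\varepsilon)A_i\ \leq\ (t_i+r_i)A_i\ \leq\ (t_i+\varepsilon)A_i
\end{equation*}
in the nef order (since each $A_i$ is nef). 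Summing over $i$ and using $|r_i|\leq \varepsilon=(\varepsilon/t_0)\cdot t_0\leq(\varepsilon/t_0)t_i$, one obtains $L'\leq(1+\varepsilon/t_0)L$ and $L'\geq(1-\varepsilon/t_0)L$. Taking $a:=\varepsilon/t_0$ then yields the claim, and $a\to 0$ as $\varepsilon\to 0$.

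The argument is essentially routine; the only mild subtlety is ensuring at the outset that a basis of nef classes with $L$ having strictly positive coordinates exists. This follows from the openness of $\mathrm{Amp}(X)$ in $N^1(X)$ together with the fact that $\mathrm{Amp}(X)$ spans $N^1(X)$: any generic small perturbation of $L$ into $\rho$ independent directions inside $\mathrm{Amp}(X)$ produces such a basis, and rescaling so that $L$ lies in the open simplex spanned by the $A_i$ guarantees $t_i>0$.
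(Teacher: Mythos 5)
Your proof is correct and follows essentially the same route as the paper's: expand $L$ and $H$ in a basis of nef classes with $L$ having strictly positive coordinates, take $t_0=\min_i t_i$, use the $\ell^1$-norm in that basis, and conclude with $a=\varepsilon/t_0$. The extra remarks on the existence of such a basis and on the equivalence of norms are harmless elaborations of steps the paper takes for granted.
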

\begin{proof}
	For any $L'$ in $U_{\varepsilon}$,  we write it as $L'=L+ H$
for some $\bb R$-divisor $H$ in $N^1(X)$ with $\|H\|<\varepsilon$.
Set $\rho:=\dim_{\bb R}N^1(X)$. 
Since $L$ is ample, there exists a basis $(A_1,\ldots,A_{\rho})$ of $N^1(X)$ with each $A_i$ in ${\rm Nef}(X)$, and there exists some $t_1,\ldots,t_{\rho}\in{\bb R}_{>0}$ such that
$L=\sum_{i=1}^\rho t_iA_i$ with $\sum_{i=1}^\rho t_i=1$.
Set $t_0=\min_it_i\in{\bb R}_{>0}$.
We may assume that the norm $\|\cdot\|$ is given by
\begin{equation*}
	\left\|\sum_{i=1}^\rho s_iA_i \right\|
	:=\sum_{i=1}^{\rho}\vert s_i\vert.
\end{equation*}
Set $H=\sum_{i=1}^\rho r_iA_i$ with $\|H\|<\varepsilon$ (i.e. $\sum_{i=1}^\rho \vert r_i\vert<\varepsilon$). 
Then we have
\begin{align*}
	L'=L+H=\sum_{i=1}^\rho(t_i+r_i)A_i
	<\sum_{i=1}^\rho(t_i+\varepsilon)A_i
	=\sum_{i=1}^\rho(t_i+\frac{\varepsilon}{t_0}t_0)A_i
	\leq(1+\frac{\varepsilon}{t_0})\sum_{i=1}^\rho t_iA_i,
\end{align*}  
also
\begin{align*}
	L'=L+H=\sum_{i=1}^\rho(t_i+r_i)A_i
	>\sum_{i=1}^\rho(t_i-\varepsilon)A_i
	=\sum_{i=1}^\rho(t_i-\frac{\varepsilon}{t_0}t_0)A_i
	\geq(1-\frac{\varepsilon}{t_0})\sum_{i=1}^\rho t_iA_i.
\end{align*}  
The proof is completed by taking $a=\varepsilon/t_0$, where $t_0=\min_i t_i$.
\end{proof}
\begin{remark}
       Consistent with the notation in Section \ref{Positive intersection product}, $"\leq"$ means that their difference is a psef class.
       In fact, $(1+a)L-L'$ and $L'-(1-a)L$ are nef according to the proof of Lemma \ref{a and varepsilon}.
\end{remark}

We now turn to the proof of Theorem \textup{\ref{lower bdd theorem}}.

\begin{proof}
	For any $L'$ in $U_{\varepsilon}$, we can write
	\begin{equation*}
		L'=L+\varepsilon H
		=\frac{1}{1+s}\left(L+sL_s\right)
	\end{equation*}
	in the same way as the proof of Lemma \ref{S estimate lemma}, for some $\bb R$-divisor $H$ and  $L_s$ in $N^1(X)$ such that $(1+s)L-L_s$ and $L_s-(1-s)L$ are big when $\varepsilon$ is small enough, where $s=\varepsilon^{1/4}$.

~\\
We divide into following these two cases, 
\begin{enumerate}[(1)]
	\item\label{case 1} 

One assume $\mu(L')\geq0$, then $\tilde{s}(L')\geq0$.
\begin{flalign}
	&\beta_{L'}(F)-\beta_L(F)    \nonumber        \\
	=&A_X(F)(\Vol(L')-\Vol(L))                          
	+n\mu(L')S_{L'}(F)-n\mu(L)S_L(F)    \nonumber \\
	 &+\int_0^{+\infty}n\langle(L'-xF)^{n-1}\rangle\cdot (K_X+\tilde{s}(L')L')dx      
	 -\tilde{s}(L')\int_0^{+\infty}n\langle(L'-xF)^{n-1}\rangle{\cdot}L'dx                \nonumber \\
	 &-\int_0^{+\infty}n\langle(L-xF)^{n-1}\rangle{\cdot}(K_X+\tilde{s}(L')L')dx         \nonumber         \\
	 &+\int_0^{+\infty}n\langle(L-xF)^{n-1}\rangle{\cdot}(K_X+\tilde{s}(L')L')dx        \nonumber           \\
	 &-\int_0^{+\infty}n\langle(L-xF)^{n-1}\rangle\cdot K_X dx  \nonumber  \\
	\geq&n\mu(L')S_{L'}(F)-n\mu(L)S_L(F)
	+A_X(F)\Vol(L')(1-s_-(n)^{-1})    \nonumber  \\
	&-\tilde{s}(L')\int_0^{+\infty}n\langle(L'-xF)^{n-1}\rangle\cdot L'dx 
	+\tilde{s}(L')\int_0^{+\infty}n\langle(L-xF)^{n-1}\rangle\cdot L' dx     \nonumber    \\
	&+\int_0^{+\infty}n\left(\langle(L'-xF)^{n-1}\rangle-\langle(L-xF)^{n-1}\rangle\right)
	    {\cdot}(K_X+\tilde{s}(L')L')dx.  \label{first case of diff of beta}
\end{flalign}
By Lemma \ref{a and varepsilon}, we can take a small positive constant $a$ (recall $a=\varepsilon/t_0$) such that
\begin{equation*}
	(1-a)L\leq L'\leq (1+a)L,
\end{equation*}
for any $L'$ in $U_{\varepsilon}$.
Then one obtains
\begin{equation*}
	(1-a)L-xF\leq L'-xF\leq (1+a)L-xF.
\end{equation*}
Therefore,
by the continuity and homogeneity of the positive intersection product (see Proposition \ref{cont of positive intersection product} or \cite[Proposition 2.9]{BFJ09}),  
we have
\begin{equation}
\label{cont of pos inters prod}
	(1-a)^{n-1}\langle(L-\frac{x}{1-a} F)^{n-1}\rangle
	\leq\langle(L'-xF)^{n-1}\rangle
	\leq(1+a)^{n-1}\langle(L-\frac{x}{1+a} F)^{n-1}\rangle.
\end{equation}
Since $K_X+\tilde{s}(L')L'$ is nef, we have
\begin{align*}
	&(1-a)^{n-1}\langle(L-\frac{x}{1-a} F)^{n-1}\rangle\cdot(K_X+\tilde{s}(L')L') \\
	\leq&\langle(L'-xF)^{n-1}\rangle\cdot(K_X+\tilde{s}(L')L')            \\
	\leq&(1+a)^{n-1}\langle(L-\frac{x}{1+a} F)^{n-1}\rangle\cdot(K_X+\tilde{s}(L')L').
\end{align*}
It follows that
\begin{align*}
    &\int_0^{+\infty}n\langle(L'-x F)^{n-1}\rangle\cdot(K_X+\tilde{s}(L')L')dx      \\
	\geq&(1-a)^{n-1}\int_0^{+\infty}n\langle(L-\frac{x}{1-a}F)^{n-1}\rangle
	    {\cdot}(K_X+\tilde{s}(L')L')dx     \\
	=&(1-a)^n\int_0^{+\infty}n\langle(L-xF)^{n-1}\rangle\cdot(K_X+\tilde{s}(L')L')dx.
\end{align*}
Thus, we obtain
\begin{align*}
	&\int_0^{+\infty}n\left(\langle(L'-xF)^{n-1}\rangle-\langle(L-xF)^{n-1}\rangle\right)
	    {\cdot}(K_X+\tilde{s}(L')L')dx      \\
	\geq&(1-(1-a)^{-n})\int_0^{+\infty}n\langle(L'-xF)^{n-1}\rangle\cdot(K_X+\tilde{s}(L')L')dx.
\end{align*}
Recall $s=\varepsilon^{1/4}$, when we choose $\varepsilon$ small enough, then $a$ ($=\varepsilon/t_0$, see Lemma \ref{a and varepsilon}) can be chosen small enough, such that
\begin{equation*}
	1-s_-(n)^{-1}\leq 1-(1-a)^{-n}.
\end{equation*}
Then, we obtain
\begin{flalign}
	&A_X(F)\Vol(L')(1-s_-(n)^{-1})  \nonumber    \\
	+&\int_0^{+\infty}n\left(\langle(L'-xF)^{n-1}\rangle-\langle(L-xF)^{n-1}\rangle\right)
	    {\cdot}(K_X+\tilde{s}(L')L')dx   \nonumber   \\
	\geq&(1-s_-(n)^{-1})\left(
	A_X(F)\Vol(L')+\int_0^{+\infty}n\langle(L'-xF)^{n-1}\rangle\cdot(K_X+\tilde{s}(L')L')dx
	\right)       \nonumber        \\
	\geq&(1-s_-(n)^{-1})\left(C_{L'}-n\mu(L')+(n+1)\tilde{s}(L')\right)S_{L'}(F) 
	\label{lower bdd of A plus pos inters term}
\end{flalign}
where we used 
(\ref{upper bound of A+R}) and Lemma \ref{integration by part lemma} for the second inequality.

Since $L'$ is ample, by (\ref{cont of pos inters prod}), we have
\begin{align*}
	(1-a)^{n-1}\langle(L-\frac{x}{1-a} F)^{n-1}\rangle\cdot L' 
	\leq&\langle(L'-xF)^{n-1}\rangle\cdot L'            \\
	\leq&(1+a)^{n-1}\langle(L-\frac{x}{1+a} F)^{n-1}\rangle\cdot L'.
\end{align*}
Then one obtains
\begin{align*}
	\int_0^{+\infty}n\langle(L'-xF)^{n-1}\rangle\cdot L'dx 
	\leq&\int_0^{+\infty}(1+a)^{n-1}n\langle(L-\frac{x}{1+a} F)^{n-1}\rangle\cdot L' dx \\
	=&(1+a)^n\int_0^{+\infty}n\langle(L-xF)^{n-1}\rangle\cdot L' dx.
\end{align*}
It follows that
\begin{flalign}
	&\tilde{s}(L')\int_0^{+\infty}n\langle(L-xF)^{n-1}\rangle\cdot L' dx  
	-\tilde{s}(L')\int_0^{+\infty}n\langle(L'-xF)^{n-1}\rangle\cdot L'dx     \nonumber     \\  
	\geq&\tilde{s}(L')((1+a)^{-n}-1)\int_0^{+\infty}n\langle(L'-xF)^{n-1}\rangle\cdot L' dx   \nonumber \\
	=&\tilde{s}(L')((1+a)^{-n}-1)(n+1)S_{L'}(F). \label{lower bdd of pos inters with L'}
\end{flalign}
Note that here we have used $\tilde{s}(L')\geq0$ and Lemma \ref{integration by part lemma}.

Combining (\ref{first case of diff of beta}), (\ref{lower bdd of A plus pos inters term}), (\ref{lower bdd of pos inters with L'}), and (\ref{estimate of slope}) , we have 
\begin{align*}
	&\beta_{L'}(F)-\beta_L(F)    \\
	\geq& -h(\varepsilon) nS_{L'}(F)      
	+\Big(
	(1-s_-(n)^{-1})\big(C_{L'}-n\mu(L')  
	+(n+1)\tilde{s}(L')\big) \\
	&+(n+1)\tilde{s}(L')((1+a)^{-n}-1)
	\Big)S_{L'}(F)                      \\
	\geq&-f(\varepsilon)S_{L'}(F),                         
\end{align*}
where $f:{\bb R}^+\rightarrow{\bb R}^+$ is a continuous function with  $f(\varepsilon)\rightarrow0$ as $\varepsilon\rightarrow0$, which depends on $\mu(L')$, $\tilde{s}(L')$ and $C_{L'}$ and intersection numbers $(-K_X)\cdot (L')^{k}\cdot L^{n-1-k}$ for $k=0,\ldots,n-1$.

By definition, we know that $\mu(L')$ and $\tilde{s}(L')$ are continuous with respect to $L'$. 
Thus, we can choose $C_{L'}$ continuously depending on $L'$.
Therefore, the choice of $f$ only depends on $X$ and $L$.

\item One assumes $\mu(L')\leq0$, then $s(L')\leq0$. We use the same idea of case (\ref{case 1}).
\begin{flalign}
	&\beta_{L'}(F)-\beta_L(F)      \nonumber       \\
	=&A_X(F)(\Vol(L')-\Vol(L))                          
	+n\mu(L')S_{L'}(F)-n\mu(L)S_L(F)   \nonumber
	  \\
	 &+\int_0^{+\infty}n\langle(L'-xF)^{n-1}\rangle\cdot K_Xdx      
	 +\int_0^{+\infty}n\langle(L'-xF)^{n-1}\rangle{\cdot}(-K_X-s(L')L')dx                 \nonumber \\
	 &-\int_0^{+\infty}n\langle(L'-xF)^{n-1}\rangle{\cdot}(-K_X-s(L')L')dx                      \nonumber \\
	 &+\int_0^{+\infty}n\langle(L-xF)^{n-1}\rangle{\cdot}(-K_X-s(L')L')dx                   
	 +s(L')\int_0^{+\infty}n\langle(L-xF)^{n-1}\rangle\cdot L' dx   \nonumber \\
	\geq&n\mu(L')S_{L'}(F)-n\mu(L)S_L(F)
	+A_X(F)\Vol(L')(1-s_-(n)^{-1})     \nonumber \\
	&-s(L')\int_0^{+\infty}n\Big(\langle(L'-xF)^{n-1}\rangle-\langle(L-xF)^{n-1}\rangle\Big)\cdot L' dx    \nonumber     \\
	&+\int_0^{+\infty}n\left(\langle(L-xF)^{n-1}\rangle-\langle(L'-xF)^{n-1}\rangle\right)
	    {\cdot}(-K_X-s(L')L')dx.          \label{second case of diff of beta}
\end{flalign}
By (\ref{cont of pos inters prod}) and Lemma \ref{integration by part lemma}, we have
\begin{flalign}
	&\int_0^{+\infty}n\left(\langle(L-xF)^{n-1}\rangle-\langle(L'-xF)^{n-1}\rangle\right)
	    {\cdot}(-K_X-s(L')L')dx    \nonumber    \\
	\geq&((1+a)^{-n}-1)\int_0^{+\infty}n\langle(L'-xF)^{n-1}\rangle
	    {\cdot}(-K_X-s(L')L')dx     \nonumber   \\
	=&(1-(1+a)^{-n})\int_0^{+\infty}n\langle(L'-xF)^{n-1}\rangle\cdot K_X       \nonumber                   \\
	&+((1+a)^{-n}-1)(-s(L'))(n+1)S_{L'}(F).   \label{lower bdd of pos inters with K_X}
\end{flalign}
Since $F$ belongs to ${\cal D}^{\rm ud}_{L'}$, one can obtain
\begin{flalign}
	A_X(F)\Vol(L')(1-s_-(n)^{-1})
	\geq&(1-s_-(n)^{-1})(C_{L'}-n\mu(L'))S_{L'}(F)          \nonumber \\
	&-(1-s_-(n)^{-1})\int_0^{+\infty}n\langle(L'-xF)^{n-1}\rangle\cdot K_Xdx.   \label{lower bdd of A}
\end{flalign}
Since $L'$ is ample, by (\ref{cont of pos inters prod}) and Lemma \ref{integration by part lemma}, we obtain
\begin{flalign}
	&(-s(L'))\int_0^{+\infty}n\Big(\langle(L'-xF)^{n-1}\rangle-\langle(L-xF)^{n-1}\rangle\Big)\cdot L' dx     \nonumber    \\
	\geq&(-s(L'))(1-(1-a)^{-n})\int_0^{+\infty}n\langle(L'-xF)^{n-1}\rangle\cdot L'dx    \nonumber    \\
	=&(-s(L'))(1-(1-a)^{-n})(n+1)S_{L'}(F).     \label{lower of pos inters with L' 2}
\end{flalign}

In addition, we have the following natural lower bound,
\begin{flalign}
	&\int_0^{+\infty}n\langle(L'-xF)^{n-1}\rangle\cdot K_Xdx   \nonumber  \\
	=&\int_0^{+\infty}n\langle(L'-xF)^{n-1}\rangle\cdot(K_X+\tilde{s}(L')L')dx
	-\tilde{s}(L')\int_0^{+\infty}n\langle(L'-xF)^{n-1}\rangle\cdot L'dx                              \nonumber \\
	\geq&-\tilde{s}(L')(n+1)S_{L'}(F).   \label{natural lower bdd}
\end{flalign}
Combining (\ref{estimate of slope}) and (\ref{second case of diff of beta})-(\ref{natural lower bdd}), we have
\begin{align*}
	&\beta_{L'}(F)-\beta_L(F) \\
	\geq&-h(\varepsilon)nS_{L'}(F) 
	+\Big((1-s_-(n)^{-1})(C_{L'}-n\mu(L')) \\
	&+((1+a)^{-n}-1)(-s(L'))(n+1)+(-s(L'))(1-(1-a)^{-n})(n+1)
	\Big)S_{L'}(F)               \\
	&+\Big(1-(1+a)^{-n}+s_-(n)^{-1}-1  \Big)\int_0^{+\infty}n\langle(L'-xF)^{n-1}\rangle\cdot K_Xdx \\
	\geq&-h(\varepsilon)nS_{L'}(F)
	+\Big((1-s_-(n)^{-1})(C_{L'}-n\mu(L')) \\
	&+((1+a)^{-n}-(1-a)^{-n})(-s(L'))(n+1)
	\Big)S_{L'}(F)               \\
	&+\Big(s_-(n)^{-1}-(1+a)^{-n} \Big)
	(-\tilde{s}(L'))(n+1)S_{L'}(F)     \\
	\geq&	-f(\varepsilon)S_{L'}(F).
\end{align*}
where $f:{\bb R}^+\rightarrow{\bb R}^+$ is a continuous function with  $f(\varepsilon)\rightarrow0$ as $\varepsilon\rightarrow0$, which depends on $\mu(L')$, $\tilde{s}(L')$, $s(L')$ and $C_{L'}$ and intersection numbers $(-K_X)\cdot (L')^{k}\cdot L^{n-1-k}$ for $k=0,\ldots,n-1$.

Similar to case (1), we can choose a continuous function $f$ which only depends on $X$ and $L$.

\end{enumerate}
By combining  above these two cases, 
we complete the proof of Theorem \ref{lower bdd theorem}.
\end{proof}

Finally, we finish the proof of the main Theorem. 

\begin{proof}[Proof of Theorem \textup{\ref{UVs open}}] 
For any $L$ in ${\rm UVs}$, by Theorem \ref{lower bdd theorem},
there exists a constant $\varepsilon_0>0$ and a continuous function $f:{\bb R}^+\rightarrow{\bb R}^+$ with $\lim_{\varepsilon\rightarrow0}f(\varepsilon)=0$, 
which only depends $X$ and $L$, such that for any $0<\varepsilon\leq\varepsilon_0$ and any $L'\in U_\varepsilon$, the inequality
	\begin{equation*}
	\beta_{L'}(F)-\beta_L(F)\geq
		-f(\varepsilon)S_{L'}(F)
	\end{equation*}
	is satisfied for all $F$ in ${\cal D}^{\rm ud}_{L'}$.
	Since $L$ is uniformly valuatively stable, combining with (\ref{idea of uvs}), we have
	\begin{equation*}
		\beta_{L'}(F)\geq
		\left(\frac{\varepsilon_L}{2}-f(\varepsilon) \right)S_{L'}(F),
	\end{equation*}
	for all $F$ in ${\cal D}^{\rm ud}_{L'}$.
	It follows that there exists an $\varepsilon_0>0$ such that 
	\begin{equation*}
		\frac{\varepsilon_L}{2}-f(\varepsilon)
		>0
	\end{equation*}
	for all $L$ in $U_{\varepsilon}$ and any $0<\varepsilon\leq\varepsilon_0$,  
	Then for any $L$ in $U_{\varepsilon}$, we have
	\begin{equation*}
		\beta_{L'}(F)\geq
		\varepsilon_{L'} S_{L'}(F)
	\end{equation*}
	for some constant $\varepsilon_{L'}>0$ and all $F$ in ${\cal D}^{\rm ud}_{L'}$.
	Thus, $L'$ belongs to ${\rm UVs}$ for any $L'$ in $U_{\varepsilon}$.
	
	Finally, by definitions of $\beta$ and $S$-invariant, we have
	\begin{equation*}
		\beta_{kL'}(F)=k^n\beta_{L'}(F), \text{ and }
		S_{kL'}(F)=k^{n+1}S_{L'}(F),
	\end{equation*}
	for $k>0$. 
	Then ${\bb R}_{+}U_{\varepsilon}\subset {\rm UVs}$.
	Therefore, the uniformly valuative stability locus $\rm UVs$ is an open subcone of ${\rm Amp}(X)$.
\end{proof}

\section{Continuity of the uniformly valuative stability threshold}
\label{Continuity of uniformly valuative stability threshold}
As an immediate application of Theorem \ref{lower bdd theorem} and \ref{UVs open}, in this section, we show the continuity of the uniformly valuative stability threshold.

\begin{definition}
\label{u.v.s.t.}
For any $L$ in ${\rm Amp}(X)$,
the \textit{uniformly valuative stability threshold} of $L$ is defined to be
\begin{equation*}
	\zeta(L):=\sup\{x\in{\bb R}|\beta_L(F)\geq xS_L(F) \text{ for any }F\in{\rm PDiv}_{/X} \}.
\end{equation*}
\end{definition}
In fact, when $(X,L)=(X,-K_X)$ is Fano, we have $\zeta(L)=\delta(X)-1$.
This is the main motivation to study the $\zeta$-invariant.

Recall the definition of $\delta$-invariant, due to Blum and Jonsson \cite{BJ20},
\begin{equation*}
	\delta(L)=\inf_{F\in{\rm PDiv}_{/X}} \frac{A_X(F)\Vol(L)}{S_L(F)},
\end{equation*}
see \cite{FO18} for the original definition of $\delta$-invariant.
Thus, one obtains
\begin{equation*}
	A_X(F)\Vol(L)\geq \delta(L)S_L(F)
\end{equation*}
for any $F$ in ${\rm PDiv}_{/X}$.
By (\ref{natural lower bdd}), we have a natural lower bound
\begin{equation*}
	\beta_L(F)\geq(\delta(L)+n\mu(L)-(n+1)\tilde{s}(L))S_L(F),
\end{equation*}
i.e.
\begin{equation*}
	\zeta(L)\geq\delta(L)+n\mu(L)-(n+1)\tilde{s}(L).
\end{equation*}
One can take a $c_{L}>0$ such that $\delta(L)+n\mu(L)-(n+1)\tilde{s}(L)+c_L>0$. 
Thus now we set $C_L:=\delta(L)+n\mu(L)-(n+1)\tilde{s}(L)+c_L>0$. in the definition of ${\cal D}^{\rm ud}_{L}$.
We also define
\begin{equation*}
	\zeta^{\rm ud}(L)
	:=\sup\{x\in{\bb R}|\beta_L(F)\geq xS_L(F) \text{ for any }F\in{\cal D}^{\rm ud}_{L} \}.
\end{equation*}
By definition, one obtains
\begin{equation*}
	C_L\geq \zeta^{\rm ud}(L).
\end{equation*}
\begin{lemma}
	For any $L$ in ${\rm Amp}(X)$, we have
	\begin{equation*}
		\zeta(L)=\zeta^{\rm ud}(L).
	\end{equation*}
\end{lemma}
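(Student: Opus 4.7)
The plan is to prove both inequalities, with one direction being immediate and the other reducing to the already-noted bound $C_L \geq \zeta^{\rm ud}(L)$.

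First, I would observe that $\zeta(L)\leq\zeta^{\rm ud}(L)$ is automatic: since ${\cal D}^{\rm ud}_L\subseteq{\rm PDiv}_{/X}$, any $x$ for which $\beta_L(F)\geq xS_L(F)$ holds for \emph{all} prime divisors over $X$ certainly satisfies the same inequality on the smaller set ${\cal D}^{\rm ud}_L$, so the admissible set in the definition of $\zeta(L)$ is contained in that of $\zeta^{\rm ud}(L)$.

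For the reverse inequality $\zeta^{\rm ud}(L)\leq\zeta(L)$, I would fix an arbitrary $x<\zeta^{\rm ud}(L)$ and show $x\leq\zeta(L)$, i.e.\ $\beta_L(F)\geq xS_L(F)$ for \emph{every} $F\in{\rm PDiv}_{/X}$. Split into two cases. If $F\in{\cal D}^{\rm ud}_L$, the inequality is precisely the definition of $\zeta^{\rm ud}(L)$. If $F\notin{\cal D}^{\rm ud}_L$, then by definition of the set one has $\beta_L(F)>C_LS_L(F)$; combined with the already-established bound $C_L\geq\zeta^{\rm ud}(L)>x$ (which is exactly what the sentence "$C_L\geq\zeta^{\rm ud}(L)$" in the excerpt records), this gives $\beta_L(F)>C_LS_L(F)\geq xS_L(F)$. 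Taking the supremum over $x<\zeta^{\rm ud}(L)$ yields $\zeta^{\rm ud}(L)\leq\zeta(L)$.

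There is essentially no obstacle here; the statement is a bookkeeping lemma whose content is entirely encoded in the choice of the threshold $C_L$. The only minor point worth flagging explicitly is the degenerate case where ${\cal D}^{\rm ud}_L=\varnothing$: in that situation both suprema are vacuously $+\infty$ (or, equivalently, one uses the convention that the empty inf equals $+\infty$), and the argument above goes through trivially because every $F$ falls into the second case. One should also briefly note that prime divisors with $S_L(F)=0$ (i.e.\ trivial divisorial valuations, if one wishes to include them) pose no issue, since the defining inequality $\beta_L(F)\geq xS_L(F)$ reduces to $\beta_L(F)\geq 0$, which is independent of $x$ and of whether $F\in{\cal D}^{\rm ud}_L$.
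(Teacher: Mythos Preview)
Your proposal is correct and follows essentially the same route as the paper's proof: the easy inclusion $\zeta(L)\leq\zeta^{\rm ud}(L)$ from ${\cal D}^{\rm ud}_L\subseteq{\rm PDiv}_{/X}$, and the reverse inequality by splitting into $F\in{\cal D}^{\rm ud}_L$ versus $F\notin{\cal D}^{\rm ud}_L$, using $C_L\geq\zeta^{\rm ud}(L)$ in the latter case. The paper argues directly with $\zeta^{\rm ud}(L)$ rather than with an auxiliary $x<\zeta^{\rm ud}(L)$, and does not discuss the degenerate cases you mention, but the substance is identical.
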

\begin{proof}
	By definitions of $\zeta^{\rm ud}(L)$ and $\zeta(L)$, we have
	\begin{equation*}
		\zeta(L)\leq \zeta^{\rm ud}(L).
	\end{equation*}
For any $F\notin{\cal D}^{\rm ud}_L$, then
\begin{equation*}
	\beta_L(F)\geq C_LS_L(F)
	\geq \zeta^{\rm ud}(L)S_L(F).
\end{equation*}
Thus, for any $F$ in ${\rm PDiv}_{/X}$, we have
\begin{equation*}
	\beta_L(F)\geq \zeta^{\rm ud}(L)S_L(F),
\end{equation*}
i.e.
\begin{equation*}
	\zeta(L)\geq \zeta^{\rm ud}(L).
\end{equation*}
\end{proof}
\begin{theorem}
\label{cont of u.v.s.t.}
	The uniformly valuative stability threshold
	\begin{equation*}
		{\rm Amp}(X)\ni L\mapsto \zeta(L)\in{\bb R}
	\end{equation*}
	is continuous on the ample cone.
\end{theorem}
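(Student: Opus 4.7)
The plan is to deduce continuity from the already established quantitative bounds: Theorem \ref{lower bdd theorem} (comparison of $\beta$-invariants), Lemma \ref{S estimate lemma} (two-sided comparison of $S$-invariants), and the equality $\zeta(L) = \zeta^{\mathrm{ud}}(L)$ just proven. I would split continuity at a fixed $L \in \mathrm{Amp}(X)$ into lower and upper semicontinuity.

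For lower semicontinuity, I would fix $L$ and invoke Theorem \ref{lower bdd theorem} to obtain $\varepsilon_0 > 0$ and $f$ with $f(\varepsilon) \to 0$. For any $L' \in U_\varepsilon$ and any $F \in \mathcal{D}^{\mathrm{ud}}_{L'}$, the chain
\begin{equation*}
\beta_{L'}(F) \;\geq\; \beta_L(F) - f(\varepsilon) S_{L'}(F) \;\geq\; \zeta(L)\, S_L(F) - f(\varepsilon) S_{L'}(F)
\end{equation*}
together with Lemma \ref{S estimate lemma} (absorbing the sign of $\zeta(L)$ into the comparison of $S_L$ and $S_{L'}$) yields $\beta_{L'}(F) \geq (\zeta(L) - g(\varepsilon)) S_{L'}(F)$ for some continuous $g$ with $g(\varepsilon) \to 0$. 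Taking the infimum over $F \in \mathcal{D}^{\mathrm{ud}}_{L'}$ and using $\zeta(L') = \zeta^{\mathrm{ud}}(L')$ gives $\zeta(L') \geq \zeta(L) - g(\varepsilon)$, hence $\liminf_{L' \to L} \zeta(L') \geq \zeta(L)$.

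For upper semicontinuity, I would apply Theorem \ref{lower bdd theorem} with the roles of $L$ and $L'$ swapped, taking $L'$ as the base divisor and $L$ as the perturbation. This produces an analogous error function $\tilde f_{L'}$ with $\tilde f_{L'}(\varepsilon) \to 0$ and
\begin{equation*}
\beta_L(F) - \beta_{L'}(F) \;\geq\; -\tilde f_{L'}(\varepsilon)\, S_L(F) \quad \text{for all } F \in \mathcal{D}^{\mathrm{ud}}_L.
\end{equation*}
Since by the natural lower bound one has $\zeta(L) \geq C_L - c_L < C_L$, for any small $\delta > 0$ one can select a nearly extremal divisor $F_\delta \in \mathcal{D}^{\mathrm{ud}}_L$ with $\beta_L(F_\delta) < (\zeta(L)+\delta) S_L(F_\delta)$. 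Substituting into the swapped bound and comparing $S_L$ with $S_{L'}$ via Lemma \ref{S estimate lemma} gives
\begin{equation*}
\frac{\beta_{L'}(F_\delta)}{S_{L'}(F_\delta)} \;\leq\; (\zeta(L) + \delta + \tilde f_{L'}(\varepsilon))(1 + g(\varepsilon)),
\end{equation*}
and the left-hand side is an upper bound for $\zeta(L')$. Sending $L' \to L$ and then $\delta \to 0$ yields $\limsup_{L' \to L} \zeta(L') \leq \zeta(L)$, completing the proof.

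The hard part is the upper semicontinuity step, which requires applying Theorem \ref{lower bdd theorem} with variable base $L'$ and controlling the resulting error $\tilde f_{L'}$ uniformly as $L'$ varies in a neighborhood of $L$. This forces one to track the dependence on the base in the construction of $f$ from the proof of Theorem \ref{lower bdd theorem}: one has to check that the ingredients entering $f$ — namely the slope $\mu(L')$, the ample thresholds $s(L')$ and $\tilde s(L')$, the constant $C_{L'}$, and the intersection numbers appearing in the argument — all depend continuously on $L'$, so that $\tilde f_{L'}(\varepsilon) \to 0$ uniformly for $L'$ near $L$. A secondary subtlety is justifying that a nearly minimizing sequence for $\zeta(L)$ can be chosen inside $\mathcal{D}^{\mathrm{ud}}_L$, which follows from the strict inequality $\zeta(L) < C_L$ built into the definition of $C_L$.
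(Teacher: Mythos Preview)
Your proposal is correct and follows essentially the same strategy as the paper's proof: both halves rely on Theorem \ref{lower bdd theorem}, Lemma \ref{S estimate lemma}, and the identity $\zeta = \zeta^{\mathrm{ud}}$, and both handle the upper semicontinuity by swapping the roles of $L$ and $L'$ in Theorem \ref{lower bdd theorem}. The only cosmetic difference is that for the upper bound the paper works with the infimum directly (bounding $\beta_L(F)/S_L(F)$ below by an expression in $\zeta(L')$ for all $F\in\mathcal D^{\mathrm{ud}}_L$, then invoking $\zeta(L)=\zeta^{\mathrm{ud}}(L)$), whereas you select a near-minimizer $F_\delta$; these are equivalent formulations. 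Your concern about the uniform dependence of $\tilde f_{L'}$ on $L'$ is exactly the point the paper addresses by noting that $\mu(L')$, $s(L')$, $\tilde s(L')$, $C_{L'}$ and $c_{L'}$ all vary continuously, so the error function can be taken to depend only on $X$ and the fixed $L$.
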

\begin{proof}
	For any $L$ in ${\rm Amp(X)}$ and any $\varepsilon>0$, we aim to show that there exists a small open neighbourhood $U_{\theta}$ of $L$ in ${\rm Amp}(X)$ such that for any $L'$ in $U_{\theta}$ satisfying 
	\begin{equation*}
		|\zeta(L')-\zeta(L)|<\varepsilon.
	\end{equation*}
	By Theorem \ref{lower bdd theorem},
	for any $L'$ in $U_{\theta}$ satisfies the following inequality 
	\begin{equation*}
	\beta_{L'}(F)-\beta_L(F)\geq
		-f(\theta)S_{L'}(F)
	\end{equation*}
	for any $F$ in ${\cal D}^{\rm ud}_{L'}$, 
	where $f$ is a continuous function with  $f(\theta)\rightarrow0$ as $\theta\rightarrow0$. 
	Moreover, $f$ only depends on $X$ and $L$.
	Thus, we have
	\begin{align*}
		\beta_{L'}(F)
		\geq&\zeta(L)S_L(F)-f(\theta)S_{L'}(F) \\
		=&(\zeta(L)+c_L)S_L(F)-c_LS_L(F)-f(\theta)S_{L'}(F)          \\
		\geq&\left((\zeta(L)+c_L)s^-(\theta)
		-c_Ls^+(\theta)-f(\theta)\right)S_{L'}(F)   \\
		=&\left(
		\zeta(L)-(1-s^-(\theta))\zeta(L)-c_L(s^+(\theta)-s^-(\theta))-f(\theta)
		\right)S_{L'}(F)
	\end{align*}
	for any $F$ in ${\cal D}^{\rm ud}_{L'}$.
	Thus one obtains
	\begin{equation*}
		\zeta(L')=\zeta^{\rm ud}(L')
		\geq\zeta(L)-\left(
		(1-s^-(\theta))\zeta(L)+c_L(s^+(\theta)-s^-(\theta))+f(\theta)\right).
	\end{equation*}
	We can take a small enough constant $\theta>0$ such that
	\begin{equation*}
		(1-s^-(\theta))\zeta(L)+c_L(s^+(\theta)-s^-(\theta))+f(\theta)
		<\varepsilon.
	\end{equation*}
	Thus, we have
	\begin{equation}
	\label{lower s.c.}
		\zeta(L')-\zeta(L)>-\varepsilon.
	\end{equation}
	On the other hand, by replacing $L$ by $L'$ and write $L=L'-\theta H$ in Theorem \ref{lower bdd theorem}, we have
	\begin{equation*}
	\beta_{L}(F)-\beta_{L'}(F)\geq
		-f(\theta)S_{L}(F)
	\end{equation*}
	for any $F$ in ${\cal D}^{\rm ud}_{L}$, 
	where $f$ is a continuous function with  $f(\theta)\rightarrow0$ as $\theta\rightarrow0$. 
	Moreover, $f$ only depends on $X$ and $L$.
	Similarly, we can compute
	\begin{align*}
		\beta_L(F)
		\geq\left( \zeta(L')s^+(\theta)^{-1}-c_{L'}(s^-(\theta)^{-1}-s^+(\theta)^{-1})-f(\theta)
		\right)S_{L}(F),
	\end{align*}
	for any $F$ in ${\cal D}^{\rm ud}_{L}$.
	One obtains
	\begin{equation*}
		\zeta(L)=\zeta^{\rm ud}(L)
		\geq \zeta(L')s^+(\theta)^{-1}-c_{L'}(s^-(\theta)^{-1}-s^+(\theta)^{-1})-f(\theta).
	\end{equation*}
	Then, we have 
	\begin{equation*}
	\zeta(L')\leq\zeta(L)+
		(s^{+}(\theta)-1)\zeta(L) 
		+c_{L'}(s^+(\theta)s^-(\theta)^{-1}-1)+s^+(\theta)f(\theta).
	\end{equation*}
	One can choose a $c_{L'}$ depending on $L'$ continuously  since $\delta(\cdot)$, $\mu(\cdot)$ and $\tilde{s}(\cdot)$ are continuous on ${\rm Amp}(X)$.
	Then we take $\theta>0$ small enough such that
	\begin{equation*}
		(s^{+}(\theta)-1)\zeta(L) 
		+c_{L'}(s^+(\theta)s^-(\theta)^{-1}-1) +s^+(\theta)f(\theta)
		<\varepsilon.
	\end{equation*}
	Thus, we have
	\begin{equation*}
		\zeta(L')-\zeta(L)<\varepsilon.
	\end{equation*}
	Together with (\ref{lower s.c.}), we finish the proof of Theorem \ref{cont of u.v.s.t.}.
	\end{proof}

\section{Valuative stability for transcendental classes}
\label{Valuative stability for transcendental classes}
In this section, let $X$ be a projective manifold. We extend the valuative stability to the K\"ahler cone of projective manifolds. 

Denote by $\cal K$ the \textit{K\"ahler cone} of $X$ and $\cal E$ the \textit{pseudo-effective cone} in $H^{1,1}(X,\R)$. 
The interior ${\cal E}^{\circ}$ of the psef cone is an open subcone, whose element is called \textit{big}.

Recall the definition of the volume of a big class $\alpha$ in ${\cal E}^{\circ}$ (\cite[Definition 3.2]{BDPP12}), 
\begin{equation*}
	\Vol(\alpha):=\sup_{T\in\alpha}\int_{\widetilde{X}}\gamma^n>0,
\end{equation*}
where the supremum is taken over all K\"ahler currents $T\in\alpha$ with logarithmic poles, and $\pi^*T=[E]+\gamma$ with respect to some modification $\pi:\widetilde{X}\rightarrow X$ for an effective $\bb Q$-divisor $E$ and a closed semi-positive form $\gamma$
(or see \cite[Definition 1.17]{BEGZ10} for a definition in the sense of the pluripotential theory).

Let $\alpha\in{\cal K}$ be a K\"ahler class of $X$, for any prime divisor $F$ over $X$, then $\Vol(\alpha-x[F])$ is well-defined for some small $x>0$. 
Since $\pi^*\alpha$ may not be K\"ahler on $Y$, but it is still big.
Therefore, by the openness of the big cone ${\cal E}^{\circ}$, we can define
\begin{equation}
	\tau_{\alpha}(F):=\{x\in\R|\Vol(\alpha-x[F])>0 \}.
\end{equation} 
It follows that the $S$-invariant is well-defined, denoted by $S_{\alpha}(\cdot)$.
Similarly, for any K\"ahler class $\alpha$, we also define
\begin{equation*}
	\mu(\alpha):=\frac{c_1(X)\cdot\alpha^{n-1}}{\alpha^n},      
\end{equation*}
\begin{equation*}
	s(\alpha):=\sup\{s\in\mathbb{R}|c_1(X)-s\alpha \text{ is K\"ahler }\},    
\end{equation*}
\begin{equation*}
	\tilde{s}(\alpha):=\inf\{s\in\mathbb{R}|-c_1(X)+s\alpha \text{ is K\"ahler }\}.
\end{equation*}
We  have $s(\alpha)\leq\mu(\alpha)\leq\tilde{s}(\alpha)$.

In \cite{BDPP12}, the authors established the perfect theory of the positive intersection product of big classes on  compact K\"ahler manifolds.
\begin{theorem}[{\textup{\cite[Theorem 3.5]{BDPP12}}}]
\label{thm 3.5 of BDPP}
	 Let $X$ be a compact K\"ahler manifold.
We denote here by $H^{k,k}_{\geq0}(X)$ the cone of cohomology classes of type $(k,k)$ which have non-negative intersection with all closed semi-positive smooth forms of bidegree $(n-k,n-k)$.
\begin{enumerate}[$(\rm i)$]
	\item For each integer $k = 1,2,\ldots,n$, there exists a canonical "movable intersection product"
	    \begin{equation*}
	    	{\cal E}\times\cdots\times{\cal E}\rightarrow H^{k,k}_{\geq0}(X),\quad
	    	(\alpha_1,\ldots,\alpha_k)\mapsto \langle\alpha_1\cdot\alpha_2\cdots\alpha_k\rangle
	    \end{equation*}
	    such that $\Vol(\alpha)=\langle\alpha^n\rangle$ whenever $\alpha$ is a big class $($see Remark \ref{analytic pos inters}$)$.
	 \item \label{superadditive}
	 The product is increasing, homogeneous of degree $1$ and super-additive in each argument, i.e.
	    \begin{equation*}
	    	\langle\alpha_1\cdots(\alpha_j'+\alpha_j'')\cdots\alpha_k\rangle
	    	\geq \langle\alpha_1\cdots\alpha_j'\cdots\alpha_k\rangle
	    	+\langle\alpha_1\cdots\alpha_j''\cdots\alpha_k\rangle.
	    \end{equation*}
	    It coincides with the ordinary intersection product when the $\alpha_j\in\overline{\cal K}$
        are nef classes.
     \item \label{Teissier-Hovanskii ineq}
     The movable intersection product satisfies the Teissier-Hovanskii inequality
         \begin{equation*}
         \langle\alpha_1\cdot\alpha_2\cdots\alpha_n\rangle
         \geq (\langle\alpha_1^n\rangle)^{1/n}\ldots(\langle\alpha_n^n\rangle)^{1/n}.
         \end{equation*}
\end{enumerate}
\end{theorem}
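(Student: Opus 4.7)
The plan is to construct the movable intersection product via Demailly's regularization of positive currents, verify its formal properties from the additivity of currents, and finally obtain the Teissier-Hovanskii inequality by reducing to the nef case on a log resolution.

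For the construction in (i), I would start from the fact that each big class $\alpha_j \in \mathcal{E}^\circ$ can be represented, by Demailly's regularization theorem, by Kähler currents $T_j$ with analytic---indeed logarithmic---singularities. On a sufficiently fine common log resolution $\pi : \widetilde X \to X$, each pullback decomposes as $\pi^* T_j = [E_j] + \gamma_j$, with $E_j$ an effective $\bb Q$-divisor and $\gamma_j$ a smooth closed semi-positive $(1,1)$-form. Then $\gamma_1 \wedge \cdots \wedge \gamma_k$ is a smooth semi-positive $(k,k)$-form on $\widetilde X$, and its pushforward by $\pi$ lies in $H^{k,k}_{\geq 0}(X)$. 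I would define $\langle \alpha_1 \cdots \alpha_k \rangle$ as the least upper bound of all such pushforwards in the partial order induced by $H^{k,k}_{\geq 0}(X)$, as one varies the Demailly approximations and the resolution. Showing that this supremum exists requires directedness of the family (obtained by refining resolutions and sharpening singularities) together with a uniform upper bound, which one gets from any nef class dominating each $\alpha_j$.

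For (ii), homogeneity of degree $1$ is immediate from the definition. Monotonicity follows because enlarging any argument only enlarges the pool of admissible approximations. Super-additivity is the substantive point: if $T_j' \in \alpha_j'$ and $T_j'' \in \alpha_j''$ are Kähler currents with analytic singularities, then $T_j' + T_j''$ sits in $\alpha_j' + \alpha_j''$, and on a common log resolution its smooth part dominates $\gamma_j' + \gamma_j''$; wedging with the other factors and pushing forward yields the required inequality. The coincidence with the ordinary intersection product on $\overline{\mathcal K}$ is then easy, since nef classes admit smooth semi-positive representatives on the base itself and no singular correction is discarded.

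The Teissier-Hovanskii inequality in (iii) will be the main obstacle. For nef classes on a compact Kähler manifold it is the classical Khovanskii-Teissier inequality, which one would prove via the Hodge-Riemann bilinear relations or Aleksandrov-Fenchel. On the common log resolution one applies this result to the nef classes $[\gamma_j]$ to obtain $\int_{\widetilde X} \gamma_1 \wedge \cdots \wedge \gamma_n \geq \prod_{j=1}^n \bigl(\int_{\widetilde X} \gamma_j^n\bigr)^{1/n}$. As the approximations are refined, each $\int \gamma_j^n$ converges to $\Vol(\alpha_j) = \langle \alpha_j^n \rangle$, while the left-hand side is dominated in the limit by $\langle \alpha_1 \cdots \alpha_n \rangle$. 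The delicate step is taking suprema simultaneously in all $n$ arguments: one must choose approximations diagonally so that every factor tends to its own supremum, and this is precisely where the flexibility of Demailly's regularization---allowing one to sharpen singularities at a controlled loss of positivity---becomes indispensable.
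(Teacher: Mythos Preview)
The paper does not prove this theorem at all: it is quoted verbatim from \cite[Theorem 3.5]{BDPP12} and used as a black box, with no argument supplied. So there is no ``paper's own proof'' to compare against.

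That said, your sketch is a faithful outline of the original BDPP argument itself: the movable product is defined exactly as you describe, via Demailly regularization and a supremum over log resolutions (this is the content of Remark~\ref{analytic pos inters} in the present paper); super-additivity comes from adding currents on a common resolution; and the Teissier--Hovanskii inequality is obtained by applying the nef Khovanskii--Teissier inequality to the smooth semi-positive parts $\gamma_j$ on $\widetilde X$ and passing to the supremum. The one point worth flagging is that in your step (ii) you say the smooth part of $T_j'+T_j''$ ``dominates $\gamma_j'+\gamma_j''$''; in fact on a common resolution the decomposition is exactly $[E_j'+E_j''] + (\gamma_j'+\gamma_j'')$, so one has equality there, and the inequality in the final product comes only from the fact that the supremum over approximations to $\alpha_j'+\alpha_j''$ is taken over a strictly larger family than sums of separate approximations. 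Otherwise the plan is sound.
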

It follows that the $\beta$-invariant is well-defined for any K\"ahler class. 
For any $\alpha$ in ${\cal K}$, we define
\begin{equation}
	\beta_\alpha(F)
	:=A_X(F)\Vol(\alpha)+n\mu(\alpha)\int_0^{+\infty}\Vol(\alpha-x[F])dx
	-\int_0^{+\infty}n\langle(\alpha-x[F])^{n-1}\rangle\cdot c_1(X)dx.
\end{equation}
Therefore, we can extend the valuative stability to any K\"ahler class,
\begin{definition}
\label{def of valuative stability for transc}
	For any $\alpha$ in ${\cal K}$, $(X,\alpha)$ is called 
	\begin{enumerate}[(i)]
	\item \textit{valuatively semistable} if 
          \begin{equation*}
          	\beta_\alpha(F)\geq0
          \end{equation*}
          for any prime divisor $F$ over $X$;
    \item \textit{valuatively stable} if
          \begin{equation*}
          	\beta_\alpha(F)>0
          \end{equation*}
          for any non-trivial prime divisor $F$ over $X$;
	\item \textit{uniformly valuatively stable} if there exists an $\varepsilon_\alpha>0$ such that
	      \begin{equation}
	    \label{uniform valuative stable}
		      \beta_\alpha(F)\geq\varepsilon_\alpha S_\alpha(F)
	    \end{equation}
	      for any prime divisor $F$ over $X$.
	\end{enumerate}
\end{definition}

The positive intersection product  $\langle\alpha_1\cdot\ldots\cdot\alpha_p\rangle$ depends continuously on the $p$-tuple $(\alpha_1,\ldots,\alpha_p)$ for any big classes $\alpha_1,\ldots,\alpha_p$ (see the below in \cite[Definition 1.17]{BEGZ10}).

If $\gamma$ is psef and $\alpha$ is big, by (\ref{superadditive}) and (\ref{Teissier-Hovanskii ineq}) of Theorem \ref{thm 3.5 of BDPP},  then we have
\begin{equation*}
	\Vol(\alpha+\gamma)\geq\Vol(\alpha).
\end{equation*}

A well-known result about the differentiability of the volume function on ${\cal E}^{\circ}$, due to D. Witt Nystr\"om \cite{WN19}, is stated as follows,
\begin{theorem}[{\textup{\cite[Theorem C]{WN19}}}]
On a projective manifold $X$ the volume function is continuously differentiable on the big cone ${\cal E}^{\circ}$ with
    \begin{equation}
	\label{diff of vol for big class}
		\frac{d}{dt}\Big|_{t=0}\Vol(\alpha+t\gamma)
		=n\langle\alpha^{n-1}\rangle\cdot\gamma.
	\end{equation}
	for any $\alpha$ in ${\cal E}^{\circ}$ and any $\gamma$ in $H^{1,1}(X,\bb R)$.
\end{theorem}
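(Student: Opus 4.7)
The plan is to establish the directional derivative formula by a squeeze argument: a lower bound on the one-sided right derivative via approximation by K\"ahler currents, the matching upper bound on the one-sided left derivative by the same argument applied in the opposite direction, and concavity of $\Vol^{1/n}$ to force the two one-sided derivatives to coincide. Continuity of the positive intersection product then upgrades pointwise differentiability to $C^1$-regularity.

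For the lower bound, fix $\alpha\in{\cal E}^\circ$ and $\gamma\in H^{1,1}(X,\R)$. For any K\"ahler current $T\in\alpha$ with analytic singularities, pick a modification $\pi:X'\to X$ with $\pi^*T=[E]+\beta$, where $E$ is an effective $\Q$-divisor and $\beta$ a smooth semi-positive form. For $t\geq 0$ small, $\beta+t\pi^*\gamma$ can be made semi-positive after a slight decrease of $\beta$ absorbed into $E$, so $\pi^*(\alpha+t\gamma)-[E]$ is represented by a semi-positive form and
\begin{equation*}
\Vol(\alpha+t\gamma)\geq\int_{X'}(\beta+t\pi^*\gamma)^n = \int_{X'}\beta^n + nt\int_{X'}\beta^{n-1}\wedge\pi^*\gamma + O(t^2).
\end{equation*}
Taking the supremum over all such approximating currents $T$ and invoking the analytic description of the movable intersection product in Remark \ref{analytic pos inters} yields
\begin{equation*}
\liminf_{t\to 0^+}\frac{\Vol(\alpha+t\gamma)-\Vol(\alpha)}{t}\geq n\langle\alpha^{n-1}\rangle\cdot\gamma.
\end{equation*}
Applying this inequality with $\gamma$ replaced by $-\gamma$ and reversing signs produces the one-sided upper bound
\begin{equation*}
\limsup_{t\to 0^-}\frac{\Vol(\alpha+t\gamma)-\Vol(\alpha)}{t}\leq n\langle\alpha^{n-1}\rangle\cdot\gamma.
\end{equation*}

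To close the gap I would invoke concavity of $\Vol^{1/n}$ on ${\cal E}^\circ$. This follows by combining the Teissier-Hovanskii inequality with super-additivity of the movable intersection product in Theorem \ref{thm 3.5 of BDPP}(ii)(iii) via the standard binomial expansion of $\langle((1-s)\alpha+s\beta)^n\rangle$. Since $\Vol(\alpha)>0$, concavity of $\Vol^{1/n}$ forces the left derivative of $\Vol$ along any line through $\alpha$ to dominate its right derivative, and together with the two previous bounds this sandwich pins both one-sided derivatives to $n\langle\alpha^{n-1}\rangle\cdot\gamma$, delivering pointwise differentiability.

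Finally, $C^1$-regularity reduces to continuity of $\alpha\mapsto\langle\alpha^{n-1}\rangle$ on ${\cal E}^\circ$, the transcendental counterpart of Proposition \ref{cont of positive intersection product}(i), which is intrinsic to the construction in Theorem \ref{thm 3.5 of BDPP}. The main obstacle I expect is the lower bound: while approximation by K\"ahler currents with analytic singularities is clean when $\gamma$ is nef, attaining the supremum $n\langle\alpha^{n-1}\rangle\cdot\gamma$ for general $\gamma$ requires controlling the concentration of $(\beta+t\pi^*\gamma)^{n-1}\wedge\pi^*\gamma$ along the singular locus of $T$. This is precisely what the non-pluripolar product formalism of \cite{BEGZ10} together with the analytic divisorial Zariski decomposition of \cite{WN19} is designed to handle, refining the algebraic argument of \cite{BFJ09} to the transcendental setting.
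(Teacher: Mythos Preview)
The paper does not prove this theorem: it is quoted verbatim as a known result of Witt Nystr\"om and immediately used (via the integration-by-parts formula \eqref{integ by part Kahler}) without any argument. So there is nothing in the paper to compare your proposal against.

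As for the proposal itself, your outline is the natural transcendental analogue of the Boucksom--Favre--Jonsson strategy from \cite{BFJ09}: lower bound on the right derivative from Fujita-type approximation, the mirrored bound on the left, and log-concavity of $\Vol^{1/n}$ (which you correctly derive from Theorem~\ref{thm 3.5 of BDPP}(ii)(iii)) to pinch. The concavity step and the $C^1$ upgrade via continuity of $\alpha\mapsto\langle\alpha^{n-1}\rangle$ are fine. The genuine gap, which you yourself flag, is the lower bound: for non-nef $\gamma$ the form $\beta+t\pi^*\gamma$ need not be semi-positive, and ``absorbing a slight decrease of $\beta$ into $E$'' does not work because $\beta$ is only semi-positive, not K\"ahler, on the blow-up. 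In the algebraic setting \cite{BFJ09} circumvents this via the orthogonality part of the divisorial Zariski decomposition and Fujita approximation; in the transcendental setting this orthogonality is precisely the hard content of \cite{WN19}. So your sketch is a correct roadmap but not a proof: the missing input is the transcendental orthogonality/Zariski-decomposition statement, which is the substance of Witt Nystr\"om's paper rather than a routine adaptation of \cite{BFJ09}.
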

Therefore, we have the similar integration by part type formula:
\begin{equation}
\label{integ by part Kahler}
	\int_0^{+\infty}n\langle(\alpha-x[F])^{n-1}\rangle\cdot\alpha dx
	=(n+1)\int_0^{\infty}\Vol(\alpha-x[F])dx,
\end{equation}
for any $\alpha$ in ${\cal K}$ and any prime divisor $F$ over $X$.

It follows that these proofs of Theorem \ref{lower bdd theorem} and Theorem \ref{UVs open} can also work for the K\"ahler cone. 
In other words, the openness of uniformly valuative stability also holds on the K\"ahler cone. 
We summary as follows,
\begin{theorem}
\label{UVs open Kahler}
	For a projective manifold $X$, the uniformly valuative stability locus 
	\begin{equation*}
		\widehat{\rm UVs}:=\{\alpha\in{\cal K}|(X,\alpha)\text{ is uniformly valuatively stable}\}
	\end{equation*}
	is an open subcone of the K\"ahler cone $\cal K$.
\end{theorem}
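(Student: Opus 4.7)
The plan is to mirror the proof of Theorem \ref{UVs open} step-for-step in the analytic setting, observing that every ingredient used for divisors is furnished by the results gathered in this section. Fix a uniformly valuatively stable Kähler class $\alpha \in \widehat{\rm UVs}$ with constant $\varepsilon_\alpha>0$, choose any norm on $H^{1,1}(X,\bb R)$, and set $U_\varepsilon := \{\alpha' \in {\cal K}: \|\alpha'-\alpha\|<\varepsilon\}$. As in the proof of Theorem \ref{UVs open}, it suffices to produce a continuous function $f$ with $f(\varepsilon)\to 0$ and a continuous function $s^-$ with $s^-(\varepsilon)\to 1$ such that, for all $\alpha'\in U_\varepsilon$ and all $F \in {\cal D}^{\rm ud}_{\alpha'}$ (the analogously defined ``hard to control'' family),
\begin{equation*}
\beta_{\alpha'}(F)-\beta_\alpha(F)\geq -f(\varepsilon)S_{\alpha'}(F),\qquad
S_\alpha(F)\geq s^-(\varepsilon)S_{\alpha'}(F),
\end{equation*}
since then $\beta_{\alpha'}(F)\geq (\varepsilon_\alpha s^-(\varepsilon)-f(\varepsilon))S_{\alpha'}(F)$ is positive for $\varepsilon$ small.

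First I would establish the Kähler versions of Lemma \ref{S estimate lemma} and Lemma \ref{a and varepsilon}. Writing $\alpha' = \alpha+\varepsilon H = \frac{1}{1+s}(\alpha + s\alpha_s)$ with $\alpha_s := \alpha + \frac{(1+s)\varepsilon}{s}H$ and $s = \varepsilon^{1/4}$, the classes $(1+s)\alpha - \alpha_s$ and $\alpha_s - (1-s)\alpha$ are Kähler (hence big) when $\varepsilon$ is small; the monotonicity property $\Vol(\alpha+\gamma)\geq \Vol(\alpha)$ for $\alpha$ big and $\gamma$ psef (recorded right after Theorem \ref{thm 3.5 of BDPP}) then yields the same two-sided comparison of $S_\alpha(F)$ and $S_{\alpha'}(F)$ verbatim. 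For Lemma \ref{a and varepsilon}, since $\cal K$ is an open convex cone in $H^{1,1}(X,\bb R)$ containing $\alpha$, we can choose a basis $(A_1,\ldots,A_\rho)$ of $H^{1,1}(X,\bb R)$ lying in $\overline{\cal K}$ and write $\alpha=\sum t_i A_i$ with all $t_i>0$; the same computation gives $(1-a)\alpha\leq \alpha'\leq (1+a)\alpha$ with $a=\varepsilon/\min_i t_i$.

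The core of the argument is then the Kähler analogue of Theorem \ref{lower bdd theorem}. Substituting $\alpha,\alpha'$ for $L,L'$ and $c_1(X)$ for $-K_X$, the proof splits into the same two cases $\mu(\alpha')\geq 0$ and $\mu(\alpha')\leq 0$ and goes through verbatim, because its three ingredients all transfer: (i) the continuity, homogeneity and super-additivity of the positive intersection product on the big cone, supplied by Theorem \ref{thm 3.5 of BDPP}(ii), which together with the sandwich $(1-a)\alpha-x[F]\leq \alpha'-x[F]\leq (1+a)\alpha-x[F]$ yields the pointwise estimate
\begin{equation*}
(1-a)^{n-1}\langle(\alpha-\tfrac{x}{1-a}[F])^{n-1}\rangle
\leq \langle(\alpha'-x[F])^{n-1}\rangle
\leq (1+a)^{n-1}\langle(\alpha-\tfrac{x}{1+a}[F])^{n-1}\rangle,
\end{equation*}
which one pairs against the nef classes $c_1(X)+\tilde s(\alpha')\alpha'$ and $-c_1(X)-s(\alpha')\alpha'$; (ii) the integration-by-parts identity (\ref{integ by part Kahler}) derived from Witt Nyström's differentiability theorem; and (iii) the slope estimate of Lemma \ref{estimate of slope with S}, whose proof is a purely intersection-theoretic computation identical to the algebraic one, using only continuity of $\mu(\alpha')$ and $\Vol(\alpha')$ in $\alpha'$. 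The resulting $f$ depends continuously on $\alpha'$ through $\mu(\alpha'),\tilde s(\alpha'),s(\alpha')$ and the intersection numbers $c_1(X)\cdot(\alpha')^k\cdot \alpha^{n-1-k}$.

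Conclusion follows exactly as in the proof of Theorem \ref{UVs open}: for $\varepsilon$ small we obtain $\beta_{\alpha'}(F)\geq \delta_{\alpha'} S_{\alpha'}(F)$ for all $F \in {\cal D}^{\rm ud}_{\alpha'}$ (the remaining $F\notin {\cal D}^{\rm ud}_{\alpha'}$ satisfy the inequality automatically), and the homogeneity $\beta_{k\alpha'}(F)=k^n\beta_{\alpha'}(F)$, $S_{k\alpha'}(F)=k^{n+1}S_{\alpha'}(F)$ promotes $U_\varepsilon$ to the open subcone $\bb R_+ U_\varepsilon \subset \widehat{\rm UVs}$. The only step that is not a literal transcription is the basis argument of Lemma \ref{a and varepsilon}, but since $H^{1,1}(X,\bb R)$ is finite dimensional and $\cal K$ is an open convex cone, a basis inside $\overline{\cal K}$ always exists, so there is in fact no genuine obstacle — the entire difficulty was absorbed into establishing the analytic positive intersection product theory and Witt Nyström's differentiability result, both of which are now available.
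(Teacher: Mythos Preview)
Your proposal is correct and follows exactly the approach the paper takes: the paper's own ``proof'' of Theorem \ref{UVs open Kahler} is simply the assertion that the proofs of Theorem \ref{lower bdd theorem} and Theorem \ref{UVs open} carry over verbatim to the K\"ahler cone once the analytic positive intersection product (Theorem \ref{thm 3.5 of BDPP}), Witt Nystr\"om's differentiability, and the integration-by-parts identity (\ref{integ by part Kahler}) are in hand. You have actually supplied more detail than the paper does, including the explicit check that a basis of $H^{1,1}(X,\bb R)$ inside $\overline{\cal K}$ exists for the analogue of Lemma \ref{a and varepsilon}.
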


\section{Further questions}
\label{Further questions}
In this section, we propose some further questions.
\subsection{Relation between K-stability and valuative stability}
In \cite{D-L20}, Dervan and Legendre prove that K-stability over integral test configurations is equivalent to valuative stability over dreamy divisors.
But for non-integral test configurations and non-dreamy divisors, this relation between K-stability and valuative stability is still open.
According to Fujita's idea of \cite{Fuj19 a}, we have the following setup.

Take any prime divisor $F$ over $X$, it induces a filtration $\cal F$ on
\begin{equation*}
	R:=\bigoplus_{r=0}^{\infty}R_k
	:=\bigoplus_{r=0}^{\infty}H^0(X,kL).
\end{equation*}
which is defined by
\begin{equation*}
{\cal F}^{x}R_k:=
     	\begin{cases} 
     		H^0(X,kL-\lfloor xF\rfloor) &\text{if }x\in{\bb R}_{\geq0},           \\
     		R_k  &\text{otherwise},
     	\end{cases}
\end{equation*}
for any $k\in{\bb Z}_{\geq 0}$ and $x\in\bb R$.
Then $\cal F$ is a decreasing, left continuous, multiplicative and linearly bounded $\bb R$-filtration.
We define
\begin{equation*}
	I_{(r,x)}
	:={\rm Image}({\cal F}^xR_r\otimes L^{-r}\rightarrow{\cal O}_X),
\end{equation*}
where the homomorphism is the evaluation.

Take any $e_+,e_-\in\bb Z$ with $e_+>\tau_L(F)$ and $e_-<0$, set $e:=e_+-e_-$.
Let $r_1\in\bb Z_{>0}$ be a sufficiently large positive integer.
For any $r\geq r_1$, we can define a family of flag ideals $\mathscr{I}_r\subset{\cal O}_{X\times{\bb A^1}}$ by
\begin{equation}
	\mathscr{I}_r
	:=I_{(r,re_+)}+I_{(r,re_+-1)}t^1+\cdots
	+I_{(r,re_--+1)}t^{re-1}+(t^{re}).
\end{equation}
Let $\pi_r:{\cal X}_r\rightarrow X\times{\bb A}^1$ be the blow up of $X\times {\bb A}^1$ along flag ideal $\mathscr{I}_r$, let $E_r\subset{\cal X}_{r,0}$ be the exceptional divisor, let $p_1:X\times{\bb A}^1\rightarrow X$ be the projection of the first factor of $X\times{\bb A}^1$.
We obtain a family of semiample test configurations (see \cite[Definition 2.2]{BHJ17}) $({\cal X}_r, {\cal L}_r)$, where ${\cal L}_r:=\rho_r^*L-1/rE_r$ is a relative semiample $\bb Q$-line bundle and $\rho_r=p_1\circ\pi_r$.

It is interesting to understand the limiting behavior of the Donaldson-Futaki invariant of this sequence $({\cal X}_r, {\cal L}_r)$. 
We expect to obtain one direction of the  relation between K-stability and valuative stability by studying the above limiting behavior.

We make the following question:
\begin{question}
	Does K-stability imply valuative stability?
\end{question}

\subsection{Openness of uniformly valuative stability on compact K\"ahler manifolds}

We have seen from Section \ref{Valuative stability for transcendental classes} that valuative stability is well-defined on compact K\"ahler manifolds.
But, in general, we do not know the differentiability of the volume function on the big cone ${\cal E}^{\circ}$ for compact K\"ahler manifolds. 
Then the useful formula (\ref{integ by part Kahler}) may not hold for compact K\"ahler manifolds.
In fact, in our argument, we only need the following inequality,
\begin{equation*}
C^{-1}\int_0^{\infty}\Vol(\alpha-x[F])dx\leq
	\int_0^{+\infty}\langle(\alpha-x[F])^{n-1}\rangle\cdot\alpha dx
	\leq C\int_0^{\infty}\Vol(\alpha-x[F])dx,
\end{equation*} 
for some constant $C>0$, which only depend continuously on $\alpha$.
We expect that this inequality holds. 

We make the following question:
\begin{question}
	Does the openness of uniformly valuative stability  still hold on the K\"ahler cone of compact K\"ahler manifolds?
\end{question}

\bigskip
\address{ 
	Tsinghua University,
	Yau Mathematical Sciences Center, 
	No 30, Shuangqing Road, Beijing 100084, 
	China \\
}
{\\liuyx20@mails.tsinghua.edu.cn}

\end{document}